\newcommand{\white}{\vspace{4mm}}
\newcommand{\N}{\mathbb{N}}
\newcommand{\C}{\mathbb{C}}
\newcommand{\Z}{\mathbb{Z}}
\newcommand{\B}{\mathcal{B}}
\newcommand{\J}{\mathcal{J}}
\newcommand{\A}{\mathcal{A}}
\newcommand{\X}{\mathcal{X}}
\newcommand{\ith}{^{\text{th}}}
\newcommand{\st}{^{\text{st}}}
\theoremstyle{definition}
\newtheorem{theorem}{Theorem}[section]
\newtheorem{corollary}[theorem]{Corollary}
\newtheorem{lemma}[theorem]{Lemma}
\newtheorem{proposition}[theorem]{Proposition}
\title{Minimal relations for the balanced algebra}
\author{Erika Pirnes}
\begin{document}

\maketitle

\noindent\rule{\textwidth}{1pt}

\vspace{-2mm}
\paragraph{Abstract:} Motivated by a problem in graph theory, this article introduces an algebra called the balanced algebra. This algebra is defined by generators and relations, and the main goal is to find a minimal set of relations for it.
\vspace{2mm}

\noindent\rule{\textwidth}{1pt}

\section{Introduction}

This article is about an algebra $\B$ called the \textit{balanced algebra}. The algebra is related to a problem that comes up in algebraic graph theory. The algebra $\B$ is defined via generators and relations, and the main goal of the article is to find a minimal set of relations for $\B$.

\paragraph{Informal explanation:} First think about all possible ``words'' in two letters $L$ and $R$. For example, $L$, $LR$, and $LLRLR$ are words. A word is called balanced if it contains equal numbers of both letters. Among the words above, only $LR$ is balanced. The elements of the balanced algebra $\B$ are linear combinations of words, for example, $LR$, or $2RR+5LLR$. Moreover, any time two balanced words appear next to each other inside a word, they may be swapped and the resulting word is considered to be the same element in $\B$ as the original word. For example, the words $LRRL$ and $RLLR$ correspond to the same element in $\B$, because they can be obtained from each other by swapping the two balanced words $LR$ and $RL$.

As another example, the words $LRLLRR=(LR)(LLRR)$ and $LLRRLR=(LLRR)(LR)$ correspond to the same element in $\B$; they can be obtained from each other by swapping $LR$ and $LLRR$. By writing the words as $LRLLRR=L(RL)(LR)R$ and $LLRRLR=L(LR)(RL)R$, it can be seen that the words can also be obtained from each other by swapping $LR$ and $RL$. As a generalization of this example, any time $LR$ and $LLRR$ appear next to each other inside a word, swapping $LR$ and $RL$ (as in the example) yields the same result as swapping $LR$ and $LLRR$. Thus it can be said that swapping $LR$ and $LLRR$ follows from swapping $LR$ and $RL$. Informally, the goal of this article is to find a ``minimal'' subset of swaps so that any swap follows from the swaps in the subset.

\paragraph{Formal explanation:} The algebra $\B$ is defined using generators and relations as follows. The generators are the letters $L$ and $R$. A \textit{word} is a concatenation of letters, and a \textit{balanced} word consists of equal numbers of both letters. The defining relations of $\B$ are that any two balanced words commute. It turns out that many of these relations are redundant. The main goal of this article is to find a minimal set of relations for $\B$; more precisely, a minimal subset of the original set of relations that can be used as the defining relations of $\B$. It will be seen that this minimal subset is not unique; the main result gives a family of minimal subsets. One of the subsets in the family is then chosen to be studied in detail.

\paragraph{Motivation:} The balanced algebra $\B$ comes up in algebraic graph theory in the following way. Start with a graph $\Gamma$, and choose a vertex $\alpha$ as a base vertex. The vertex set of $\Gamma$ is partitioned into sets called \textit{subconstituents}; the $i\ith$ subconstituent consists of the vertices at distance $i$ from $\alpha$. The vertices of $\Gamma$ form a basis of a vector space called the \textit{standard module}. The \textit{raising} matrix $R$ and the \textit{lowering} matrix $L$ act on this basis by sending a vertex in the $i\ith$ subconstituent to the sum of its neighbors in the $(i+1)\st$ or $(i-1)\st$ subconstituent, respectively.

Under some assumptions ($\Gamma$ is distance-regular and bipartite), the matrices $L$ and $R$, together with certain projection matrices, generate an algebra called the \textit{subconstituent algebra} $T$ of $\Gamma$ with respect to $\alpha$. Certain well-behaved irreducible $T$-modules are called \textit{thin} modules. Under the assumptions mentioned above, it is known that the balanced words in $L$ and $R$ commute if and only if every irreducible $T$-module is thin \cite{paul}. In this case the graph $\Gamma$ is called thin with respect to $\alpha$. Studying the balanced algebra may help to better understand thin graphs.

\paragraph{Organization of the article:} In Section 2, it is explained how the condition ``balanced words commute'' comes up in algebraic graph theory. The balanced algebra $\B$ is defined in Section 3. The concept of swaps, used in most proofs, is also explained in that section. Section 4 is for introducing some useful tools. A family of minimal sets of relations for $\B$ is found in Section 5. In Section 6, one member of the family is studied in detail.

\pagebreak
\section{Motivation}

This section gives a bit more detail about how the balanced algebra comes up in algebraic graph theory, in the study of distance-regular graphs. The familiar cube is an example of a distance-regular graph, and it is used as a running example to illustrate the concepts discussed. Reading this section is not necessary for understanding the rest of the article.

\paragraph{Assumptions regarding graphs:} Throughout this section, $\Gamma$ denotes a finite, undirected, and connected graph without any loops or repeated edges. The vertex set of $\Gamma$ is denoted by $\X$, and the number of vertices by $n$.

\paragraph{Definition (distance and diameter):} For a nonnegative integer $k$, a \textit{path} of length $k$ in $\Gamma$ is a sequence $x_0,x_1,\dots, x_k$ of distinct vertices such that for $1\leq i\leq k$, the vertices $x_{i-1}$ and $x_i$ are adjacent. This path is said to be \textit{from} $x_0$ \textit{to} $x_k$. The \textit{path-length distance function} $\partial$ is defined as follows: for vertices $x$ and $y$ of $\Gamma$, their distance $\partial(x,y)$ is the minimal length of a path from $x$ to $y$. The \textit{diameter} $d=d(\Gamma)$ is defined to be the maximal distance between two vertices of $\Gamma$.


\paragraph{Definition (distance-regular graph and intersection numbers):} The graph $\Gamma$ is called \textit{distance-regular} if, for $0\leq i,j\leq d$, the size of the set $\{z\colon \partial(x,z)=i,\partial(z,y)=j\}$ does not depend on the vertices $x$ and $y$, but only on their distance $h=\partial(x,y)$. The size of the above set is denoted by $p_{ij}^h$. The numbers $p_{ij}^h$ ($0\leq h,i,j\leq d$) are called the \textit{intersection numbers} of $\Gamma$.

\paragraph{Definition (bipartite graph):} The graph $\Gamma$ is \textit{bipartite} if its vertex set $\X$ can be partitioned into two subsets with the property that two vertices belonging to the same subset are never adjacent.

\paragraph{More assumptions regarding graphs:} For the rest of the section, it is assumed that $\Gamma$ is distance-regular and bipartite. A vertex $\alpha$ of $\Gamma$ is chosen as a base vertex.

\paragraph{General notes about intersection numbers:} Firstly, the intersection numbers are symmetric in the sense that $p_{ij}^h=p_{ji}^h$. Secondly, the distance function $\partial$ satisfies the triangle inequality, which means that $p_{ij}^h=0$ if the sum of two of the numbers $h,i,j$ is less than the third one. Thirdly, $p_{ij}^h=0$ if $h+i+j$ is odd. (This is because bipartite graphs do not have odd cycles.)

\paragraph{Example:} The cube graph $Q_3$ has vertex set $\{0,1\}^3$, and two vertices are adjacent if they differ in exactly one coordinate. Note that the distance between two vertices is equal to the number of coordinates at which they differ. As there are three coordinates, the diameter of $Q_3$ is 3.

\begin{figure}[H]
\begin{center}
\scalebox{0.8}{
\begin{tikzpicture}
    \newcommand\h{1.4}
    \node[circle,fill=black,scale=0.4] at (0,0) (a) {};
    \node[circle,scale=0.4] at (-2,\h) (b1) {};
    \node[circle,scale=0.4] at (0,2) (b2) {};
    \node[circle,scale=0.4] at (2,\h) (b3) {};
    \node[circle,fill=black,scale=0.4] at (-2,2+\h) (c1) {};
    \node[circle,fill=black,scale=0.4] at (0,2*\h) (c2) {};
    \node[circle,fill=black,scale=0.4] at (2,2+\h) (c3) {};
    \node[circle,scale=0.4] at (0,2+2*\h) (d) {};
    \draw (b1)--(a)--(b3);
    \draw (a)--(b2);
    \draw (b1)--(c2)--(b3);
    \draw (c1)--(b2)--(c3);
    \draw (b1)--(c1)--(d)--(c3)--(b3);
    \draw (c2)--(d);
    \draw [semithick] (-2,\h) circle (1.9pt);
    \draw [semithick] (0,2) circle (1.9pt);
    \draw [semithick] (2,\h) circle (1.9pt);
    \draw [semithick] (0,2+2*\h) circle (1.9pt);
    \node[below = .5mm of a] {$(0,0,0)$};
    \node[left = .5mm of b1] {$(0,1,0)$};
    \node[right = .5mm of b3] {$(1,0,0)$};
    \node at (-3/5,8/5) {$(0,0,1)$};
    \node[left = .5mm of c1] {$(0,1,1)$};
    \node[right = .5mm of c3] {$(1,0,1)$};
    \node at (3/5,2*\h+2/5) {$(1,1,0)$};
    \node[above = .5mm of d]{$(1,1,1)$};
\end{tikzpicture}
}
\end{center}
    \caption{The cube graph $Q_3$.}
    \label{figuresimplecube}
\end{figure}
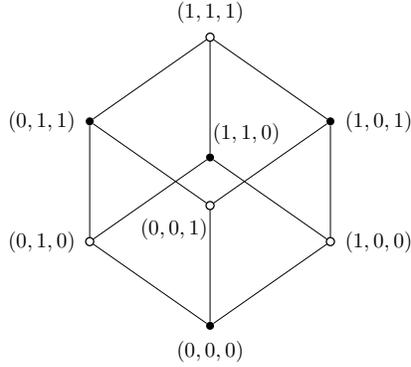

The graph $Q_3$ is distance-regular. The table below shows the intersection numbers $p_{ij}^h$ for the triples $(h,i,j)$ which satisfy the triangle inequality and for which $h+i+j$ is even; symmetry of intersection numbers allows to save space by listing only values with $i\geq j$.

\begin{center}
\begin{tabular}{|*{16}{c|}}
    \hline
    $h$ & 0&0&0&0&1&1&1&2&2&2&2&2&3&3&3\\
    \hline
    $i$ & 1&2&3&0&1&2&3&1&2&2&3&3&2&3&3\\   
    \hline
    $j$ & 1&2&3&0&0&1&2&1&0&2&1&3&1&0&2\\
    \hline
    $p_{ij}^h$ & 3&3&1&1&1&2&1&2&1&2&1&0&3&1&0\\
    \hline
\end{tabular}
\end{center}

\paragraph{Definition (standard module and subconstituents):} The \textit{standard module} $V$ of $\Gamma$ is a $\C$-vector space with basis $\{v\colon v\in \X\}$. For $0\leq i\leq d$, the set $\Gamma_i(\alpha)=\{z\in \X \colon \partial(x,z)=i\}$ is called the $i\ith$ \textit{subconstituent} of $\Gamma$ (with respect to $\alpha$). Let $\text{Mat}_\X(\C)$ denote the algebra consisting of square matrices over $\C$ with rows and columns indexed by $\X$. The algebra $\text{Mat}_\X(\C)$ acts on $V$ by left multiplication.

\paragraph{Example:} For the graph $Q_3$, the vertex $\alpha=(0,0,0)$ is chosen as the base vertex. The standard module has dimension 8. See Figure \ref{figurecube} for an illustration of $Q_3$ and its subconstituents.

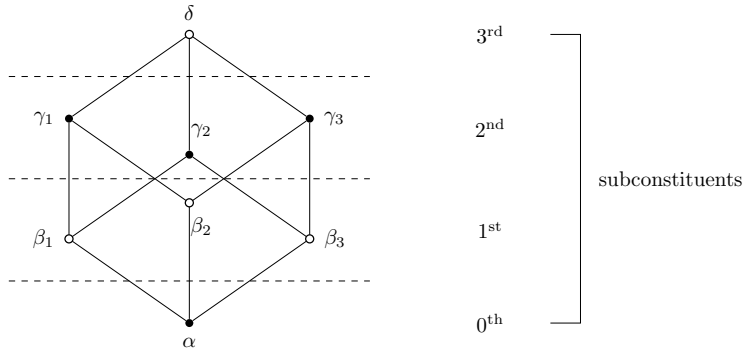
\begin{figure}[H]
\begin{center}
\scalebox{0.8}{
\begin{tikzpicture}
    \newcommand\h{1.4}
    \node[circle,fill=black,scale=0.4] at (0,0) (a) {};
    \node[circle,scale=0.4] at (-2,\h) (b1) {};
    \node[circle,scale=0.4] at (0,2) (b2) {};
    \node[circle,scale=0.4] at (2,\h) (b3) {};
    \node[circle,fill=black,scale=0.4] at (-2,2+\h) (c1) {};
    \node[circle,fill=black,scale=0.4] at (0,2*\h) (c2) {};
    \node[circle,fill=black,scale=0.4] at (2,2+\h) (c3) {};
    \node[circle,scale=0.4] at (0,2+2*\h) (d) {};
    \draw (b1)--(a)--(b3);
    \draw (a)--(b2);
    \draw (b1)--(c2)--(b3);
    \draw (c1)--(b2)--(c3);
    \draw (b1)--(c1)--(d)--(c3)--(b3);
    \draw (c2)--(d);
    \draw [semithick] (-2,\h) circle (1.9pt);
    \draw [semithick] (0,2) circle (1.9pt);
    \draw [semithick] (2,\h) circle (1.9pt);
    \draw [semithick] (0,2+2*\h) circle (1.9pt);
    \node[below = .5mm of a] {$\alpha$};
    \node[left = .5mm of b1] {$\beta_1$};
    \node[right = .5mm of b3] {$\beta_3$};
    \node at (1/5,8/5) {$\beta_2$};
    \node[left = .5mm of c1] {$\gamma_1$};
    \node[right = .5mm of c3] {$\gamma_3$};
    \node at (1/5,2*\h+2/5) {$\gamma_2$};
    \node[above = .5mm of d]{$\delta$};
    \draw[dashed] (-3,\h/2)--(3,\h/2);
    \draw[dashed] (-3,\h+1)--(3,\h+1);
    \draw[dashed] (-3, 2+3*\h/2)--(3,2+3*\h/2);
    \node at (5,0) {$0\ith$};
    \node at (5,3*\h/4+1/2) {$1^\text{st}$};
    \node at (5,5*\h/4+3/2) {$2^\text{nd}$};
    \node at (5,2+2*\h) {$3^\text{rd}$};
    \draw (6,0)--(6.5,0)--(6.5,2+2*\h)--(6,2+2*\h);
    \node at (8,1+\h) {subconstituents};
\end{tikzpicture}
}
\end{center}
    \caption{The graph $Q_3$ with base vertex $\alpha$. The dashed lines are used to separate the subconstituents.}
    \label{figurecube}
\end{figure}

\paragraph{Definition (raising and lowering matrices):} The raising matrix $R$ and the lowering matrix $L$ are matrices in $\text{Mat}_\X(\C)$ which act on $V$ as follows. Let $0\leq i\leq d$, and let $v$ be a vertex in the $i\ith$ subconstituent. Then $Rv$ is the sum of the neighbors of $v$ in the $(i+1)\st$ subconstituent (or 0 if $i=d$). Similarly, $Lv$ is the sum of the neighbors of $v$ in the $(i-1)\st$ subconstituent (or 0 if $i=0$).

\paragraph{Example:} The table below shows how the raising and lowering matrices act on the vertices of $Q_3$. The vertex labeling is from Figure \ref{figurecube}.

\begin{center}
\begin{tabular}{|*{3}{c|}}
    \hline
    $v$ & $Rv$ & $Lv$\\
    \hline
    $\alpha$ & $\beta_1+\beta_2+\beta_3$ & 0\\
    \hline
    $\beta_1$ & $\gamma_1+\gamma_2$ & $\alpha$\\
    \hline
    $\beta_2$ & $\gamma_1+\gamma_3$ & $\alpha$\\
    \hline
    $\beta_3$ & $\gamma_2+\gamma_3$ & $\alpha$\\
    \hline
    $\gamma_1$ & $\delta$ & $\beta_1+\beta_2$\\
    \hline
    $\gamma_2$ & $\delta$ & $\beta_1+\beta_3$\\
    \hline
    $\gamma_3$ & $\delta$ & $\beta_2+\beta_3$\\
    \hline
    $\delta$ & 0 & $\gamma_1+\gamma_2+\gamma_3$\\
    \hline
\end{tabular}
\end{center}

\paragraph{Note:} When explicitly writing down matrices associated with $Q_3$, the rows and columns will be indexed in the order given by the leftmost column in the table above.

\paragraph{Definition (adjacency matrix):} The adjacency matrix $A$ of $\Gamma$ is a matrix in $\text{Mat}_\X(\C)$ which acts on $V$ by sending a vertex to the sum of its neighbors.

\paragraph{A matrix equation:} Keeping in mind that $\Gamma$ is bipartite (which implies that two vertices inside the same subconstituent cannot be adjacent), the neighbors of a vertex in the $i\ith$ subconstituent can only be in the $(i+1)\st$ or $(i-1)\st$ subconstituent. This implies that the raising, lowering, and adjacency matrices are related via the equation $A=R+L$.

\paragraph{Example:} For $Q_3$, the matrix equation $A=R+L$ looks as follows.


\begin{align*}
\overset{\raise.5em \hbox{A}}{
\begin{bsmallmatrix}
0 & \bf{1} & \bf{1} & \bf{1} & 0 & 0 & 0 & 0 \\
\bf{1} & 0 & 0 & 0 & \bf{1} & \bf{1} & 0 & 0 \\
\bf{1} & 0 & 0 & 0 & \bf{1} & 0 & \bf{1} & 0 \\
\bf{1} & 0 & 0 & 0 & 0 & \bf{1} & \bf{1} & 0 \\
0 & \bf{1} & \bf{1} & 0 & 0 & 0 & 0 & \bf{1} \\
0 & \bf{1} & 0 & \bf{1} & 0 & 0 & 0 & \bf{1} \\
0 & 0 & \bf{1} & \bf{1} & 0 & 0 & 0 & \bf{1} \\
0 & 0 & 0 & 0 & \bf{1} & \bf{1} & \bf{1} & 0
\end{bsmallmatrix}
}=
\overset{\raise.5em \hbox{R}}{
\begin{bsmallmatrix}
0 & 0 & 0 & 0 & 0 & 0 & 0 & 0 \\
\bf{1} & 0 & 0 & 0 & 0 & 0 & 0 & 0 \\
\bf{1} & 0 & 0 & 0 & 0 & 0 & 0 & 0 \\
\bf{1} & 0 & 0 & 0 & 0 & 0 & 0 & 0 \\
0 & \bf{1} & \bf{1} & 0 & 0 & 0 & 0 & 0 \\
0 & \bf{1} & 0 & \bf{1} & 0 & 0 & 0 & 0 \\
0 & 0 & \bf{1} & \bf{1} & 0 & 0 & 0 & 0 \\
0 & 0 & 0 & 0 & \bf{1} & \bf{1} & \bf{1} & 0
\end{bsmallmatrix}
}+
\overset{\raise.5em \hbox{L}}{
\begin{bsmallmatrix}
0 & \bf{1} & \bf{1} & \bf{1} & 0 & 0 & 0 & 0 \\
0 & 0 & 0 & 0 & \bf{1} & \bf{1} & 0 & 0 \\
0 & 0 & 0 & 0 & \bf{1} & 0 & \bf{1} & 0 \\
0 & 0 & 0 & 0 & 0 & \bf{1} & \bf{1} & 0 \\
0 & 0 & 0 & 0 & 0 & 0 & 0 & \bf{1} \\
0 & 0 & 0 & 0 & 0 & 0 & 0 & \bf{1} \\
0 & 0 & 0 & 0 & 0 & 0 & 0 & \bf{1} \\
0 & 0 & 0 & 0 & 0 & 0 & 0 & 0
\end{bsmallmatrix}.
}
\end{align*}

\paragraph{Projections:} For $0\leq i\leq d$, let $E_i^\star$ denote the matrix in $\text{Mat}_\X(\C)$ which acts on $V$ as follows: A vertex in the $i\ith$ subconstituent is sent to itself, and vertices in all other subconstituents are sent to zero. Note that $I=E_0^\star+E_1^\star+\dots+E_d^\star$.

\paragraph{Example:} The diameter of $Q_3$ is 3 (as seen earlier), and the equation $I=E_0^\star+E_1^\star+E_2^\star+E_3^\star$ looks as follows.

\begin{align*}
\overset{\raise.5em \hbox{$I$}}{
\begin{bsmallmatrix}
\bf{1} & 0 & 0 & 0 & 0 & 0 & 0 & 0 \\
0 & \bf{1} & 0 & 0 & 0 & 0 & 0 & 0 \\
0 & 0 & \bf{1} & 0 & 0 & 0 & 0 & 0 \\
0 & 0 & 0 & \bf{1} & 0 & 0 & 0 & 0 \\
0 & 0 & 0 & 0 & \bf{1} & 0 & 0 & 0 \\
0 & 0 & 0 & 0 & 0 & \bf{1} & 0 & 0 \\
0 & 0 & 0 & 0 & 0 & 0 & \bf{1} & 0 \\
0 & 0 & 0 & 0 & 0 & 0 & 0 & \bf{1}
\end{bsmallmatrix}
}=
\overset{\raise.5em \hbox{$E_0^\star$}}{
\begin{bsmallmatrix}
\bf{1} & 0 & 0 & 0 & 0 & 0 & 0 & 0 \\
0 & 0 & 0 & 0 & 0 & 0 & 0 & 0 \\
0 & 0 & 0 & 0 & 0 & 0 & 0 & 0 \\
0 & 0 & 0 & 0 & 0 & 0 & 0 & 0 \\
0 & 0 & 0 & 0 & 0 & 0 & 0 & 0 \\
0 & 0 & 0 & 0 & 0 & 0 & 0 & 0 \\
0 & 0 & 0 & 0 & 0 & 0 & 0 & 0 \\
0 & 0 & 0 & 0 & 0 & 0 & 0 & 0
\end{bsmallmatrix}
}+
\overset{\raise.5em \hbox{$E_1^\star$}}{
\begin{bsmallmatrix}
0 & 0 & 0 & 0 & 0 & 0 & 0 & 0 \\
0 & \bf{1} & 0 & 0 & 0 & 0 & 0 & 0 \\
0 & 0 & \bf{1} & 0 & 0 & 0 & 0 & 0 \\
0 & 0 & 0 & \bf{1} & 0 & 0 & 0 & 0 \\
0 & 0 & 0 & 0 & 0 & 0 & 0 & 0 \\
0 & 0 & 0 & 0 & 0 & 0 & 0 & 0 \\
0 & 0 & 0 & 0 & 0 & 0 & 0 & 0 \\
0 & 0 & 0 & 0 & 0 & 0 & 0 & 0
\end{bsmallmatrix}
}+
\overset{\raise.5em \hbox{$E_2^\star$}}{
\begin{bsmallmatrix}
0 & 0 & 0 & 0 & 0 & 0 & 0 & 0 \\
0 & 0 & 0 & 0 & 0 & 0 & 0 & 0 \\
0 & 0 & 0 & 0 & 0 & 0 & 0 & 0 \\
0 & 0 & 0 & 0 & 0 & 0 & 0 & 0 \\
0 & 0 & 0 & 0 & \bf{1} & 0 & 0 & 0 \\
0 & 0 & 0 & 0 & 0 & \bf{1} & 0 & 0 \\
0 & 0 & 0 & 0 & 0 & 0 & \bf{1} & 0 \\
0 & 0 & 0 & 0 & 0 & 0 & 0 & 0
\end{bsmallmatrix}
}+
\overset{\raise.5em \hbox{$E_3^\star$}}{
\begin{bsmallmatrix}
0 & 0 & 0 & 0 & 0 & 0 & 0 & 0 \\
0 & 0 & 0 & 0 & 0 & 0 & 0 & 0 \\
0 & 0 & 0 & 0 & 0 & 0 & 0 & 0 \\
0 & 0 & 0 & 0 & 0 & 0 & 0 & 0 \\
0 & 0 & 0 & 0 & 0 & 0 & 0 & 0 \\
0 & 0 & 0 & 0 & 0 & 0 & 0 & 0 \\
0 & 0 & 0 & 0 & 0 & 0 & 0 & 0 \\
0 & 0 & 0 & 0 & 0 & 0 & 0 & \bf{1}
\end{bsmallmatrix}
}.
\end{align*}

\paragraph{Definition ($T$-modules):} Let $T$ be the subalgebra of $\text{Mat}_\X(\C)$ generated by $R$, $L$, and $E_0^\star,\dots, E_d^\star$. A \textit{$T$-module} is a subspace of $V$ that is closed under the action of $T$. A $T$-module $W$ is \textit{irreducible} if $W\neq 0$ and $W$ does not contain any $T$-modules other than 0 and $W$ itself. An irreducible $T$-module $W$ is \textit{thin} if $W\cap E_i^\star V$ has dimension 0 or 1 for $0\leq i\leq d$. The graph $\Gamma$ is called \textit{thin with respect to} $\alpha$ if every irreducible $T$-module is thin.

\paragraph{A key result:} The graph $\Gamma$ is thin with respect to $\alpha$ if and only if the balanced words in $R$ and $L$ commute.

\textit{Sketch of proof.} In \cite{paul}, it is shown that every irreducible $T$-module is thin if and only if  $E_i^\star TE_i^\star$ is commutative for $0\leq i\leq d$. The latter condition can be shown to be equivalent to balanced words in $R$ and $L$ commuting.

\paragraph{Further reading:} The graph $Q_3$ is a special case of a hypercube $Q_d$. The graph $Q_d$ is discussed in detail in \cite{junie}.

\pagebreak

\section{Ideals and swapping}

This section describes how the balanced algebra is obtained as the quotient by an ideal $\J$ of the free algebra with two generators $L$ and $R$. (This is exactly the generators and relations approach from Section 1, done in detail.) The main goal of this article is to find a minimal generating set for $\J$. A key result of this section gives a powerful perspective (``swaps'') for looking at ideal membership.

\paragraph{Definition (words and free algebra):} Let $\A$ be the free $\C$-algebra with the generators $L$ and $R$. The two generators are called \textit{letters}, and a \textit{word of length $n$} is a product (concatenation) $a_1a_2\dots a_n$, where $a_i$ is a letter for $1\leq i\leq n$. A \textit{subword} of $a_1a_2\dots a_n$ is a word $a_ka_{k+1}\dots a_l$ where $1\leq k\leq l\leq n$. The length of a word $W$ is denoted by $l(W)$. The empty word has length 0, and it is the multiplicative identity of $\A$. All other words have positive length and are called nonempty. As a complex vector space, $\A$ has a basis consisting of all possible words in the two letters $L$ and $R$.

\paragraph{Example:} $LLL$ and $RL$ are words, and $2LLL+5RL$ is an element of $\A$. Some subwords of $RRLRLL$ are $RRLR$ and $LRL$.

\paragraph{Important note:} In this article, a ``word'' always refers to a word in $\A$, and ``an ideal of $\A$'' is used to mean a two-sided ideal of $\A$.

\paragraph{Definition (balanced words and balanced algebra):} A word is called \textit{balanced} if the letters $L$ and $R$ appear equally many times in it. Define the set
$$S=\{FG-GF\colon\; F\text{ and }G\text{ are nonempty balanced words}\},$$
and let $\J$ be the ideal of $\A$ generated by $S$. The quotient algebra $\B=\A/\J$ is called the \textit{balanced algebra}.

\paragraph{Example:} The words $LR$ and $RRLRLL$ are both nonempty balanced words, which implies that $(LR)(RRLRLL)-(RRLRLL)(LR)\in S$. See Figure \ref{figureword} for an illustration of the word $RRLRLL$.

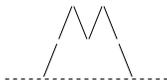
\begin{figure}[H]
\begin{center}
\scalebox{0.5}{
\begin{tikzpicture}
    \newcommand\s{0.4}
    \node[circle,scale=0.4] at (0,0) (0) {};
    \node[circle,scale=0.4] at (1*\s,1) (1) {};
    \node[circle,scale=0.4] at (2*\s,2) (2) {};
    \node[circle,scale=0.4] at (3*\s,1) (3) {};
    \node[circle,scale=0.4] at (4*\s,2) (4) {};
    \node[circle,scale=0.4] at (5*\s,1) (5) {};
    \node[circle,scale=0.4] at (6*\s,0) (6) {};
    \draw (0)--(1)--(2)--(3)--(4)--(5)--(6);
    \draw[dashed] (-1,0)--(6*\s+1,0);
\end{tikzpicture}
}
\end{center}
    \caption{The word $RRLRLL$. An ascending line segment represents the letter $R$ and a descending line segment represents the letter $L$. (Words are drawn from left to right.) As the word is balanced, the word begins and ends at the same vertical level; in this picture, a dashed line is drawn at that level. }
    \label{figureword}
\end{figure}

\paragraph{Goal:} The main goal of this article is to find a minimal subset of $S$ that generates $\J$. Note that there are many possible such subsets; in fact, the main result of this article (Theorem \ref{minimal}) gives an infinite family of them. On the way, some choices are made, so there might well exist some ``nice'' minimal subset of $S$ that generates $\J$ but which does not belong in this family.

\paragraph{Definition (equivalence of words):} Define the binary relation $\sim$ on the set of words as follows: if $X$ and $Y$ are words, then $X\sim Y$ whenever $X-Y\in\J$. Note that $\sim$ is an equivalence relation. Whenever equivalence classes of words are mentioned, they refer to equivalence classes with respect to the relation $\sim$.

\paragraph{Example:} $LRRRLL-RRLLLR=(LR)(RRLL)-(RRLL)(LR)\in S\subset \J$, and this implies that $LRRRLL\sim RRLLLR$.

\paragraph{Note:} The equivalence relation defined above is the same one that would normally be used for determining whether two elements of $\A$ correspond to the same element in the quotient $\B$; the only difference is that this equivalence relation is only used for words (and not linear combinations of words). Going forward, elements of $\B$ are not discussed, and instead everything is done in $\A$.

\paragraph{Definition (swaps):} Consider two nonempty balanced words $F$ and $G$, and assume that they appear next to each other inside a word $W$. Then there exist words $W_1$ and $W_2$ so that $W=W_1FGW_2$, or $W=W_1GFW_2$. Switching the places of $F$ and $G$ is called a \textit{swap} of type $(F,G)$. The two words $W_1FGW_2$ and $W_1GFW_2$ are said to be \textit{related by a swap}, or more precisely, related by a swap of type $(F,G)$. Note that a swap of type $(F,G)$ is the same thing as a swap of type $(G,F)$.

\paragraph{Example:} The words $RLLR=(RL)(LR)$ and $LRRL=(LR)(RL)$ are related by a swap of type $(RL,LR)$.

\paragraph{Note:} Sometimes two words can be related by a swap in multiple ways, as seen in the following example.

\paragraph{Example:} The words $RRLLRL=R(RL)(LR)L$ and $RLRRLL=R(LR)(RL)L$ are related by a swap of type $(RL,LR)$. By rearranging the parentheses, the words can be written as $(RRLL)(RL)$ and $(RL)(RRLL)$, so the words are also related by a swap of type $(RRLL,RL)$. Figure \ref{figureswap} illustrates the two words.

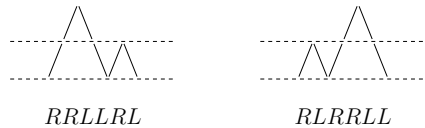
\begin{figure}[H]
\begin{center}
\scalebox{0.5}{
\begin{tikzpicture}
    \newcommand\s{0.4}
    \node[circle,scale=0.4] at (0,0) (0) {};
    \node[circle,scale=0.4] at (1*\s,1) (1) {};
    \node[circle,scale=0.4] at (2*\s,2) (2) {};
    \node[circle,scale=0.4] at (3*\s,1) (3) {};
    \node[circle,scale=0.4] at (4*\s,0) (4) {};
    \node[circle,scale=0.4] at (5*\s,1) (5) {};
    \node[circle,scale=0.4] at (6*\s,0) (6) {};
    \draw (0)--(1)--(2)--(3)--(4)--(5)--(6);
    \draw[dashed] (-1,0)--(6*\s+1,0);
    \draw[dashed] (-1,1)--(6*\s+1,1);
    \node[] at (3*\s, -1) {\LARGE $RRLLRL$};
\end{tikzpicture}
    \hspace{2cm}
\begin{tikzpicture}
    \newcommand\s{0.4}
    \node[circle,scale=0.4] at (0,0) (0) {};
    \node[circle,scale=0.4] at (1*\s,1) (1) {};
    \node[circle,scale=0.4] at (2*\s,0) (2) {};
    \node[circle,scale=0.4] at (3*\s,1) (3) {};
    \node[circle,scale=0.4] at (4*\s,2) (4) {};
    \node[circle,scale=0.4] at (5*\s,1) (5) {};
    \node[circle,scale=0.4] at (6*\s,0) (6) {};
    \draw (0)--(1)--(2)--(3)--(4)--(5)--(6);
    \draw[dashed] (-1,0)--(6*\s+1,0);
    \draw[dashed] (-1,1)--(6*\s+1,1);
    \node[] at (3*\s, -1) {\LARGE $RLRRLL$};
\end{tikzpicture}
}
\end{center}
    \caption{The words $RRLLRL$ and $RLRRLL$ can be obtained from each other by switching the places of the ``peak'' $RL$ and ``valley'' $LR$. The higher dashed line indicates the level where the product of these words starts and ends. Alternatively, they can be obtained from each other by switching the places of the ``high peak'' $RRLL$ and the ``low peak'' $RL$. The lower dashed line indicates the level where the product of these words starts and ends.}
    \label{figureswap}
\end{figure}

\pagebreak

\paragraph{Definition (sequence of swaps):} Let $X$ and $Y$ be words. Assume that $Z_1,Z_2,\dots, Z_k$ are words with $Z_1=X$ and $Z_k=Y$. If $Z_i$ and $Z_{i+1}$ are related by a swap for $1\leq i \leq k-1$, then the words $X$ and $Y$ are said to have a \textit{sequence of swaps} between them. (Note that the situation is symmetric in the sense that if $X$ and $Y$ have a sequence of swaps between them, then so do $Y$ and $X$.)

\white
\begin{lemma}
\label{idealswap}
Let $I$ be any set, and let $F_i$ and $G_i$ be balanced words for all $i\in I$. Let $X$ and $Y$ be any two words. The following are equivalent:
\begin{enumerate}[label = (\roman*)]
    \item $X-Y$ is in the ideal $\mathcal{K}$ generated by $\{F_iG_i-G_iF_i\}_{i\in I}$;
    \item there is a sequence of swaps between the words $X$ and $Y$, where every swap is of type $(F_i,G_i)$ for some $i\in I$.
\end{enumerate}
\end{lemma}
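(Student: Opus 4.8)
The plan is to prove the two implications separately, with the easier direction $(ii)\Rightarrow(i)$ first. For $(ii)\Rightarrow(i)$, it suffices to show that if two words $Z$ and $Z'$ are related by a single swap of type $(F_i,G_i)$, then $Z-Z'\in\mathcal{K}$; the general case then follows by writing $X-Y$ as a telescoping sum $\sum_{j=1}^{k-1}(Z_j-Z_{j+1})$ and using that $\mathcal{K}$ is closed under addition. For the single-swap case, write $Z=W_1F_iG_iW_2$ and $Z'=W_1G_iF_iW_2$ for some words $W_1,W_2$ (after possibly renaming, using that a swap of type $(F_i,G_i)$ is the same as one of type $(G_i,F_i)$). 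Then $Z-Z'=W_1(F_iG_i-G_iF_i)W_2$, which is a product (on the left by $W_1$, on the right by $W_2$) of a generator of $\mathcal{K}$, hence lies in $\mathcal{K}$ since $\mathcal{K}$ is a two-sided ideal.

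For $(i)\Rightarrow(ii)$, I would first reformulate the relation ``there is a sequence of swaps between $X$ and $Y$ using only types $(F_i,G_i)$'' as an equivalence relation $\approx$ on words: reflexivity is the empty sequence, symmetry is reversing a sequence, and transitivity is concatenation of sequences. Let $N$ be the linear span in $\A$ of all differences $X-Y$ where $X\approx Y$. The goal is to show $\mathcal{K}\subseteq N$ (the reverse inclusion is exactly $(ii)\Rightarrow(i)$, already done), because then $X-Y\in\mathcal{K}$ forces $X-Y\in N$, and one must then extract from membership in the span $N$ that the specific words $X$ and $Y$ are themselves related by a sequence of swaps — this extraction step is the main obstacle and I address it below. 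To show $\mathcal{K}\subseteq N$, note $N$ contains every generator $F_iG_i-G_iF_i$ (a single swap applied to the empty surrounding words), so it is enough to check that $N$ is a two-sided ideal of $\A$. Since $N$ is by construction a subspace, and every element of $\A$ is a linear combination of words, it suffices to check that $N$ is closed under left and right multiplication by an arbitrary word $U$: if $X\approx Y$ via some sequence $X=Z_1,\dots,Z_k=Y$, then $UZ_1,\dots,UZ_k$ is a sequence of swaps showing $UX\approx UY$ (each swap inside $Z_j$ is still a swap inside $UZ_j$, of the same type), so $U(X-Y)=UX-UY\in N$, and symmetrically on the right.

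The remaining difficulty is passing from ``$X-Y$ lies in the span $N$'' to ``$X$ and $Y$ are related by a sequence of swaps,'' since a priori a single element of $N$ could be a complicated linear combination of many swap-differences. The clean way to handle this is to observe that $\approx$-equivalence classes partition the set of all words, and hence the corresponding cosets (equivalently, the subspaces spanned by $\{Z : Z\approx Z_0\}$ for each class representative $Z_0$) give a direct-sum decomposition of $\A$ as a vector space. Under this decomposition, $N$ is precisely the subspace of tuples whose coordinates within each block sum to zero: indeed every $X-Y$ with $X\approx Y$ lies in a single block with coordinate-sum zero, and conversely any element of a block with coordinate-sum zero is a linear combination of such differences by a standard argument (subtract off a telescoping combination). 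Now if $X-Y\in\mathcal{K}\subseteq N$, then since $X-Y$ is supported in the blocks of $X$ and of $Y$ and has zero coordinate-sum in each block, $X$ and $Y$ must lie in the same block — that is, $X\approx Y$ — which is exactly statement $(ii)$. I would present this last argument carefully, as it is the one genuinely non-formal point; everything else is bookkeeping about ideals and telescoping sums.
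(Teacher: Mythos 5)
Your proposal is correct. The direction (ii)$\Rightarrow$(i) is exactly the paper's argument (single swap gives $W_1(F_iG_i-G_iF_i)W_2\in\mathcal{K}$, then telescope), so there is nothing to compare there. For (i)$\Rightarrow$(ii) you take a genuinely different, more structural route. The paper fixes one particular expression $X-Y=\sum_j \alpha_j(X_j-Y_j)$, builds a finite graph on the words occurring in it with edges given by the swap-related pairs, assigns each vertex its coefficient as a weight, and argues that the weight-sum over each connected component is zero, so $Y$ must lie in the component of $X$. You instead never choose an expression: you define the subspace $N$ spanned by all differences of $\approx$-equivalent words, verify that $N$ is a two-sided ideal containing the generators (hence $\mathcal{K}\subseteq N$), and then read off $X\approx Y$ from the direct-sum decomposition of $\A$ into $\approx$-blocks, using that any element of $N$ has coordinate-sum zero in every block while $X-Y$ with $X\not\approx Y$ would contribute a bare $+1$ in the block of $X$. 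The conserved quantity is the same in both proofs (sum of coefficients over an equivalence class), but your packaging buys a slightly stronger statement essentially for free --- you identify $\mathcal{K}=N$ exactly, i.e.\ you characterize the ideal as the span of swap-differences --- and it avoids both the choice of a representative expression and the graph construction. The paper's version is more elementary and self-contained for a reader who does not want to think about direct sums over infinitely many blocks; yours is cleaner if one is comfortable with the observation that the blocks have finite support in any given element. The only points worth spelling out when writing it up are the trivial case $X=Y$ (handled by reflexivity of $\approx$, i.e.\ the empty sequence of swaps) and the fact that linear independence of distinct words in $\A$ is what makes the block coordinates well defined; the paper invokes the same fact when it asserts that $X$ and $Y$ appear as vertices with weights $1$ and $-1$.
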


\begin{proof}
(i) $\implies$ (ii): The assumption $X-Y\in \mathcal{K}$ implies that $X-Y=\sum_{j=1}^m \alpha_j W_{j,1}(F_jG_j-G_jF_j)W_{j,2}$, where $W_{j,1}$ and $W_{j,2}$ are words and $\alpha_j\in \C$ are nonzero scalars; for each $j=1,\dots, m$ there exists $i\in I$ so that $(F_j,G_j)=(F_i,G_i)$. The term $W_{j,1}(F_jG_j-G_jF_j)W_{j,2}$ involves the words $X_j=W_{j,1}F_jG_jW_{j,2}$ and $Y_j=W_{j,1}G_jF_jW_{j,2}$, which are related by a swap of type $(F_j,G_j)$. With the simplified notation,
\begin{equation} \label{xysum}
    X-Y=\sum_{j=1}^m \alpha_j (X_j- Y_j).
\end{equation}

Using the equation (\ref{xysum}), define a graph $\Gamma$ as follows: the vertices of $\Gamma$ are all the words appearing in the terms on the right hand side, that is, the vertex set is $\{W\colon W=X_j \text{ or } W=Y_j \text{ for some } j\}$. Two vertices are adjacent if there is an index $j$ so that one of the words is equal to $X_j$ and the other one is $Y_j$. Note that the equation (\ref{xysum}), together with the fact that distinct words in $\A$ are linearly independent, implies that both $X$ and $Y$ are vertices.

If it can be shown that $X$ and $Y$ are in the same connected component, then there is a sequence of swaps between $X$ and $Y$. This is because each edge comes from a pair of words related by a swap, as mentioned above. For the argument, it is convenient to make $\Gamma$ into a weighted graph. Impose a weighting on the vertices of $\Gamma$ as follows: for a vertex $W$, the weight of $W$ is the coefficient of $W$ on the right hand side of (\ref{xysum}), when the sum is distributed. Because of that same equation, the vertex $X$ has weight 1, the vertex $Y$ has weight $-1$, and all other vertices have weight 0.

For each $j$, the vertices $X_j$ and $Y_j$ are in the same connected component, by the definition of $\Gamma$. Therefore the sum on the right hand side of (\ref{xysum}) can be separated into sums over each connected component, and this implies that the sum of weights over any connected component is zero. Now, if $Y$ is not in the connected component of $X$, then the sum of weights of this component is 1, which is a contradiction. Therefore $X$ and $Y$ are in the same component and so there is a sequence of swaps between $X$ and $Y$.

(ii) $\implies$ (i): It needs to be shown that $X-Y\in \mathcal{K}$, and this will be done by induction on the number of swaps in the sequence of swaps between $X$ and $Y$. First assume that the sequence consists of a single swap of type $(F,G)$, where $F=F_i$ and $G=G_i$ for some $i$. This means that there are (possibly empty) words $W_1$ and $W_2$ so that $X=W_1FGW_2$ and $Y=W_1GFW_2$. Then $X-Y=W_1(FG-GF)W_2\in \mathcal{K}$.

\pagebreak

Now assume that there is a sequence of $n$ swaps between $X$ and $Y$, with $n\geq 2$. Then there exists a word $W$ so that $X$ and $W$ have a sequence of $n-1$ swaps between them and $W$ and $Y$ are related by a single swap. By induction, both $X-W$ and $W-Y$ are in $\mathcal{K}$, and therefore so is $X-Y=(X-W)+(W-Y)$.
\end{proof}

\paragraph{Note:} The following proposition provides a powerful characterization for the equivalence of words, and it will be used in proofs throughout the article.

\white
\begin{proposition}
\label{equivalence}
Let $X$ and $Y$ be two words. Then $X\sim Y$ if and only if there is a sequence of swaps between $X$ and $Y$.
\end{proposition}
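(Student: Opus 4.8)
The plan is to obtain this proposition as an immediate specialization of Lemma \ref{idealswap}. First I would apply that lemma with the index set $I$ taken to be the set of all ordered pairs $(F,G)$ of nonempty balanced words, and with $F_i = F$, $G_i = G$ for $i = (F,G) \in I$. With this choice the generating set $\{F_i G_i - G_i F_i\}_{i \in I}$ is exactly the set $S$ appearing in the definition of the balanced algebra, so the ideal $\mathcal{K}$ provided by Lemma \ref{idealswap} is precisely $\J$.

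Next I would unwind the two equivalent conditions of Lemma \ref{idealswap} under this choice. Condition (i) becomes $X - Y \in \J$, which by definition of $\sim$ is the statement $X \sim Y$. Condition (ii) becomes: there is a sequence of swaps between $X$ and $Y$ in which every swap is of type $(F_i, G_i)$ for some $i \in I$. Here I would observe that, by the definition of a swap, \emph{every} swap is of type $(F,G)$ for some pair of nonempty balanced words $F, G$, and every such pair is one of the indices $i \in I$; hence the clause ``of type $(F_i,G_i)$ for some $i \in I$'' is no restriction at all, and condition (ii) simply says that there is a sequence of swaps between $X$ and $Y$. Combining the two reformulations, Lemma \ref{idealswap} yields that $X \sim Y$ if and only if there is a sequence of swaps between $X$ and $Y$, which is the assertion.

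I do not anticipate any genuine obstacle here, since the argument is essentially a quotation of Lemma \ref{idealswap}. The only point warranting an explicit sentence is the matching of the index set $I$ with the collection of all swap types, which simultaneously ensures that $\mathcal{K}$ equals $\J$ (no extra generators beyond those in $S$) and that condition (ii) excludes no legitimate swap. Everything else follows formally.
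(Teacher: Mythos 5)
Your proposal is correct and is exactly the paper's argument: the paper proves this proposition by citing Lemma \ref{idealswap} with generating set $S$, and you have simply spelled out the specialization (matching the index set to all pairs of nonempty balanced words, so that $\mathcal{K}=\J$ and condition (ii) imposes no restriction on swap types). No gaps; your version is just a more explicit write-up of the same one-line proof.
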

\begin{proof}
This is a special case of Lemma \ref{idealswap}, where the generating set is $S$.
\end{proof}

\white
\begin{corollary}
\label{finite}
Let $\mathcal{E}$ denote an equivalence class of words. Then
\begin{enumerate}[label = (\roman*)]
    \item $\mathcal{E}$ consists of words of equal length, and
    \item $\mathcal{E}$ is finite.
\end{enumerate}
\end{corollary}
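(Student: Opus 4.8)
The plan is to deduce both parts directly from Proposition \ref{equivalence}, which reduces equivalence of words to the existence of a sequence of swaps.

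For part (i), the key observation is that a single swap preserves word length: a swap of type $(F,G)$ transforms $W_1FGW_2$ into $W_1GFW_2$, and since $l(FG)=l(F)+l(G)=l(GF)$, the two words have the same length. Consequently, if $Z_1,Z_2,\dots,Z_k$ is a sequence of swaps, then $l(Z_1)=l(Z_2)=\dots=l(Z_k)$ by induction on $k$. Now take any two words $X,Y\in\mathcal{E}$. Since they lie in the same equivalence class, $X\sim Y$, so by Proposition \ref{equivalence} there is a sequence of swaps between them, and hence $l(X)=l(Y)$. Thus all words in $\mathcal{E}$ have a common length, call it $n$.

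For part (ii), I would simply note that there are exactly $2^n$ words of length $n$ in $\A$ (each of the $n$ letters is either $L$ or $R$), so the set of all words of length $n$ is finite. By part (i), $\mathcal{E}$ is a subset of this finite set, hence finite. (If $\mathcal{E}$ happens to be the class of the empty word, then $n=0$ and $\mathcal{E}=\{\es\}$ has one element, so the claim holds trivially in that case as well.)

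I do not anticipate any real obstacle here; the entire content is packaged into Proposition \ref{equivalence}, and the only thing to check carefully is the length-invariance of a single swap, which is immediate from the definition of a swap. The one small point worth stating explicitly is that ``equivalence class'' here means a class under $\sim$, so that Proposition \ref{equivalence} applies to any two of its members.
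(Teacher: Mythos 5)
Your proof is correct and takes essentially the same route as the paper, which simply states that the corollary ``follows from Proposition \ref{equivalence}''; you have filled in exactly the intended details (swaps preserve length, and there are only $2^n$ words of length $n$). No issues.
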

\begin{proof}
Follows from Proposition \ref{equivalence}.
\end{proof}

\pagebreak
\section{Prime words and elevation}

This section introduces the concepts of prime words and elevation. One of the results in this section states that if the balanced words in the definition of $S$ are replaced with prime words, then the resulting set $S'$ generates $\J$. The set $S'$ is used as an intermediate step in finding a minimal generating set for $\J$.

\paragraph{Definition (prime words):} A word is called \textit{prime} if it is nonempty, balanced, and cannot be written as the product of two nonempty balanced words.

\white
\begin{lemma}
\label{primebeginning}
A nonempty balanced word $a_1a_2\dots a_n$ is prime if and only if the word $a_1a_2\dots a_k$ is not balanced for $k=1,\dots n-1$.
\end{lemma}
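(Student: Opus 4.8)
The plan is to prove both implications by contraposition, using one elementary observation about balance: if a balanced word is written as a concatenation $UV$ of a prefix $U$ and a suffix $V$, then $U$ is balanced if and only if $V$ is balanced. This holds because the number of occurrences of $L$ (resp.\ $R$) in $UV$ is the sum of the corresponding counts in $U$ and in $V$; so if the counts of $L$ and $R$ agree in $UV$, then the difference (number of $L$'s minus number of $R$'s) in $U$ is the negative of that same difference in $V$, and hence one of these differences is zero exactly when the other is. I would record this at the start of the proof.

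For the ``only if'' direction I would argue the contrapositive: suppose $a_1a_2\dots a_k$ is balanced for some $k$ with $1\le k\le n-1$. Set $F=a_1\dots a_k$ and $G=a_{k+1}\dots a_n$, so that $a_1\dots a_n=FG$. Since $1\le k\le n-1$, both $F$ and $G$ are nonempty. By assumption $F$ is balanced, and since $a_1\dots a_n$ is balanced, the observation above forces $G$ to be balanced as well. Thus $a_1\dots a_n$ is a product of two nonempty balanced words, so it is not prime.

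For the ``if'' direction I would again use the contrapositive: suppose the word $W=a_1\dots a_n$ is not prime. It is given to be nonempty and balanced, so by the definition of prime the only way it can fail to be prime is that $W=FG$ for some nonempty balanced words $F$ and $G$. Because $F$ and $G$ are both nonempty, $F=a_1\dots a_k$ for some $k$ with $1\le k\le n-1$, and $F$ is balanced by hypothesis. Hence $a_1\dots a_k$ is balanced for this $k$ in the range $1\le k\le n-1$, which is exactly the negation of the stated condition.

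There is no real obstacle here; the only point requiring a little care is the bookkeeping with the index $k$: the constraint $1\le k\le n-1$ must be matched precisely with the requirement that both factors in a decomposition $W=FG$ be nonempty, and one must keep in mind that the definition of ``prime'' already incorporates ``nonempty and balanced'', so that for the nonempty balanced word $W$, failing to be prime is equivalent to admitting a decomposition into two nonempty balanced words.
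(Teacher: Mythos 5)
Your proposal is correct and follows essentially the same argument as the paper: the forward direction via the observation that a balanced prefix of a balanced word forces a balanced suffix (the paper phrases this as a contradiction rather than a contrapositive), and the converse by noting that a factorization into two nonempty balanced words would produce a balanced proper prefix. Your version just makes the elementary prefix/suffix observation and the index bookkeeping more explicit.
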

\begin{proof}
Assume first that $a_1a_2\dots a_n$ is prime. If $a_1a_2\dots a_k$ is balanced for some $k$ with $1\leq k\leq n-1$, then the word $a_{k+1}\dots a_n$ is also balanced. This implies that $a_1a_2\dots a_n$ can be written as a product of two nonempty balanced words, which is a contradiction. For the converse direction, assume that the word $a_1a_2\dots a_k$ is not balanced for $k=1,\dots n-1$. Then $a_1a_2\dots a_n$ cannot be written as the product of two nonempty balanced words, so it is prime.
\end{proof}

\paragraph{Example:} The words $RL$, $LLRR$, and $RRRLRLLL$ are prime, which can be checked using Lemma \ref{primebeginning}. On the other hand, the word $RRLLRLRLLLRLRR$ is not prime, as it can be written as the product of $RRLL$, $RL$, $RL$, and $LLRLRR$, which are all primes. See Figure \ref{figureprimefactorization} for an illustration.

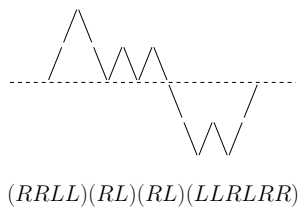
\begin{figure}[H]
\begin{center}
\scalebox{0.5}{
\begin{tikzpicture}
    \newcommand\s{0.4}
    \node[circle,scale=0.4] at (0,0) (0) {};
    \node[circle,scale=0.4] at (1*\s,1) (1) {};
    \node[circle,scale=0.4] at (2*\s,2) (2) {};
    \node[circle,scale=0.4] at (3*\s,1) (3) {};
    \node[circle,scale=0.4] at (4*\s,0) (4) {};
    \node[circle,scale=0.4] at (5*\s,1) (5) {};
    \node[circle,scale=0.4] at (6*\s,0) (6) {};
    \node[circle,scale=0.4] at (7*\s,1) (7) {};
    \node[circle,scale=0.4] at (8*\s,0) (8) {};
    \node[circle,scale=0.4] at (9*\s,-1) (9) {};
    \node[circle,scale=0.4] at (10*\s,-2) (10) {};
    \node[circle,scale=0.4] at (11*\s,-1) (11) {};
    \node[circle,scale=0.4] at (12*\s,-2) (12) {};
    \node[circle,scale=0.4] at (13*\s,-1) (13) {};
    \node[circle,scale=0.4] at (14*\s,0) (14) {};
    \draw (0)--(1)--(2)--(3)--(4)--(5)--(6)--(7)--(8)--(9)--(10)--(11)--(12)--(13)--(14);
    \draw[dashed] (-1,0)--(14*\s+1,0);
    \node[] at (7*\s, -3) {\LARGE $(RRLL)(RL)(RL)(LLRLRR)$};
\end{tikzpicture}
}
\end{center}
    \caption{Illustration of the word $RRLLRLRLLLRLRR=(RRLL)(RL)(RL)(LLRLRR)$. Each prime in the product starts and ends at the dotted line.}
    \label{figureprimefactorization}
\end{figure}

\white
\begin{lemma}
\label{primelemma}
A nonempty balanced word $W$ can be written uniquely as $W=P_1P_2\cdots P_k$, where $k$ is a positive integer and $P_i$ is a prime word for $1\leq i\leq k$.
\end{lemma}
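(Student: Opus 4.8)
The plan is to establish existence by induction on the length $l(W)$, and uniqueness by observing that the first prime factor $P_1$ is not a choice at all: it must be the \emph{shortest} nonempty balanced prefix of $W$. This single observation, combined with Lemma \ref{primebeginning}, does all the real work.

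For existence, let $W$ be a nonempty balanced word and let $P_1=a_1a_2\cdots a_j$ be its shortest nonempty balanced prefix; this is well defined since $W$ itself is a balanced prefix of $W$. By minimality, $a_1a_2\cdots a_k$ is not balanced for $1\leq k\leq j-1$, so Lemma \ref{primebeginning} shows that $P_1$ is prime. Write $W=P_1W'$. Since $W$ and $P_1$ are both balanced, so is $W'$. If $W'$ is empty we are done with $k=1$; otherwise $W'$ is a nonempty balanced word with $l(W')<l(W)$, and the induction hypothesis writes $W'=P_2\cdots P_k$ as a product of primes, giving the desired factorization $W=P_1P_2\cdots P_k$.

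For uniqueness, suppose $W=P_1P_2\cdots P_k=Q_1Q_2\cdots Q_m$ with every $P_i$ and $Q_j$ prime. Both $P_1$ and $Q_1$ are nonempty balanced prefixes of $W$, so without loss of generality $l(P_1)\leq l(Q_1)$ and $P_1$ is a prefix of $Q_1$. If $P_1\neq Q_1$, then $P_1$ is a proper nonempty balanced prefix of the prime word $Q_1$, contradicting Lemma \ref{primebeginning}. Hence $P_1=Q_1$. Because distinct words form a basis of $\A$, cancellation is legitimate, so $P_2\cdots P_k=Q_2\cdots Q_m$. Now induct on $l(W)$: if this common word is empty then $k=m=1$ (each factor being nonempty); otherwise it is a nonempty balanced word shorter than $W$, and the induction hypothesis forces $k-1=m-1$ and $P_i=Q_i$ for all $i$.

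There is no substantial obstacle here; the proof is routine once the ``shortest balanced prefix'' observation is in hand. The only points requiring a little care are the bookkeeping around the empty word when a tail $P_i\cdots P_k$ (or $Q_j\cdots Q_m$) vanishes, and noting explicitly that the cancellation step is justified by the linear independence of distinct words in $\A$.
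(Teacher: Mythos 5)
Your proof is correct. The paper offers no argument at all for this lemma (its proof is the single word ``Clear.''), and your writeup---existence via the shortest nonempty balanced prefix, which is prime by Lemma~\ref{primebeginning}, and uniqueness by noting that a proper nonempty balanced prefix of a prime would contradict that same lemma, followed by cancellation and induction---is exactly the standard argument the author is implicitly relying on.
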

\begin{proof}
Clear.
\end{proof}

\paragraph{Definition (prime factors):} Referring to Lemma \ref{primelemma}, the prime words $P_i$ are called \textit{prime factors} of $W$, and the \textit{number of prime factors of} $W$ refers to the number $k$.

\paragraph{A new set:} Define the set
$$S'=\{PQ-QP\colon\; P\text{ and }Q\text{ are prime words}\}.$$
Note that $S'\subset S$.

\white
\begin{lemma}
\label{sprime}
The set $S'$ generates the ideal $\J$.
\end{lemma}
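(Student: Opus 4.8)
The plan is to prove that the ideal $\mathcal{K}$ generated by $S'$ coincides with $\J$. One inclusion is free: since $S'\subseteq S$ (as already noted) and $\J$ is the ideal generated by $S$, we have $S'\subseteq \J$ and hence $\mathcal{K}\subseteq \J$. For the reverse inclusion, because $\J$ is generated by $S$ it suffices to show that every element $FG-GF$ of $S$ lies in $\mathcal{K}$, where $F$ and $G$ are arbitrary nonempty balanced words.

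To establish this I would apply Lemma \ref{idealswap}, taking the index set to be the set of all ordered pairs of prime words and the generating family to be $S'$. That lemma then reduces the problem to the following: for each pair of nonempty balanced words $F,G$, exhibit a sequence of swaps between the words $FG$ and $GF$ in which every swap has type $(P,Q)$ for some prime words $P$ and $Q$.

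The construction is a ``bubble sort'' on prime factors. Using Lemma \ref{primelemma}, write $F=P_1P_2\cdots P_m$ and $G=Q_1Q_2\cdots Q_n$ with every $P_i$ and $Q_j$ prime. Starting from $FG=P_1\cdots P_m Q_1\cdots Q_n$, first move $Q_1$ to the front by successively swapping it with $P_m, P_{m-1},\dots,P_1$; each such step swaps the two adjacent balanced (indeed prime) subwords $P_i$ and $Q_1$, so it is a legitimate swap of type $(P_i,Q_1)$, and after $m$ of them the word is $Q_1P_1\cdots P_mQ_2\cdots Q_n$. Repeating this with $Q_2$, then $Q_3$, and so on, after $mn$ swaps the word becomes $Q_1\cdots Q_nP_1\cdots P_m=GF$. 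This is the required sequence of prime-type swaps, so Lemma \ref{idealswap} yields $FG-GF\in\mathcal{K}$, and therefore $\J\subseteq\mathcal{K}$, giving $\mathcal{K}=\J$.

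I do not anticipate a genuine obstacle; the only care needed is bookkeeping in the bubble sort — checking that each intermediate object is honestly a single word, that every step really is an admissible swap of two adjacent prime (hence balanced, hence nonempty) subwords, and that the hypotheses of Lemma \ref{idealswap} are matched correctly with $S'$ playing the role of the generating set.
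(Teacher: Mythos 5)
Your proof is correct, but it takes a genuinely different route from the paper. The paper never invokes Lemma \ref{idealswap} here: it argues directly in the algebra by induction on the number $p$ of prime factors of $FG$, using the telescoping identity $FG-GF=F_1(F_2G-GF_2)+(F_1G-GF_1)F_2$ when $F=F_1F_2$ is not prime, so that the base case $p=2$ lands in $S'$ and the inductive step stays inside the ideal $\J'$. You instead translate the problem into the swap language via Lemma \ref{idealswap} (only its easy direction (ii)$\implies$(i) is needed) and then exhibit an explicit sequence of $mn$ prime-type swaps --- a bubble sort on the prime factorizations $F=P_1\cdots P_m$, $G=Q_1\cdots Q_n$ --- carrying $FG$ to $GF$. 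Both arguments are complete; your bookkeeping is sound since every intermediate step really does transpose two adjacent prime (hence nonempty balanced) subwords. The paper's version is shorter and purely algebraic; yours is more constructive and is closer in spirit to how the rest of the paper operates (the proof of Lemma \ref{upperandlr}, for instance, is exactly this kind of explicit swap-sequence argument), at the cost of routing through Lemma \ref{idealswap} where a two-line algebraic identity suffices.
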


\begin{proof}
Let $\J'$ denote the ideal generated by $S'$; the goal is to prove that $\J=\J'$. The inclusion $\J'\subset \J$ follows from the fact that $S'\subset S$. For the reverse inclusion, it suffices to show that if $F$ and $G$ are two nonempty balanced words, then $FG-GF\in \J'$. Let $p$ denote the number of prime factors in $FG$. The proof is by induction on $p$. As both $F$ and $G$ are nonempty, they both have at least one prime factor, which means that $p\geq 2$. If $p=2$, then $F$ and $G$ are both prime words and thus $FG-GF\in S'\subset \J'$.

Assume that $p>2$. Then either $F$ or $G$ is not prime; without loss of generality, it may be assumed that $F$ is not prime. Thus $F$ can be written as $F_1F_2$ where both $F_1$ and $F_2$ are nonempty balanced words. Now
\begin{align*}
    FG-GF&=F_1F_2G-GF_1F_2\\
    &=F_1F_2G-F_1GF_2+F_1GF_2-GF_1F_2\\
    &=F_1(F_2G-GF_2)+(F_1G-GF_1)F_2.
\end{align*}
Each of $F_1$ and $F_2$ has fewer prime factors than $F$, so the number of prime factors in both $F_1G$ and $F_2G$ is less than $p$. By induction, $F_iG-GF_i\in \J'$ for $i=1,2$. Using this together with the calculation from above, it can be concluded that $FG-GF\in \J'$. This concludes the proof.
\end{proof}

\paragraph{Definition (elevation):} For a letter $a$, assign a weight $\overline{a}$ as follows: $\overline{a}=1$ if $a=R$ and $\overline{a}=-1$ if $a=L$. Let $W=a_1a_2\dots a_n$ be a balanced word, and let $0\leq k\leq n$. The $k\ith$ \textit{elevation of} $W$ is denoted by $e_k(W)$ and given by $e_k(W)=\sum_{i=1}^k \overline{a_i}$. The values $e_k(W)$ form the \textit{elevation sequence} $Q(W)=\{e_k(W)\}_{k=0}^n$. The underlying multiset of $Q(W)$ is called the \textit{elevation multiset of} $W$, and it is denoted by $E(W)$.

\paragraph{Note:} The elevation sequence of any balanced word begins and ends with a zero.

\paragraph{First example:} The balanced word $W=RRLL$ has elevation sequence $Q(W)=\{0,1,2,1,0\}$, and elevation multiset $E(W)=\{0^2,1^2,2\}$. The exponent indicates how many times a number appears in the multiset.

\paragraph{Second example:} The balanced word $W=RRRLLRLLLLRRRL$ has elevation sequence $$Q(W)=\{0,1,2,3,2,1,2,1,0,-1,-2,-1,0,1,0\}$$ and elevation multiset $E(W)=\{-2,(-1)^2, 0^4, 1^4, 2^3, 3\}$. See Figure \ref{figuresequence} for an illustration.

\white
\begin{figure}[H]
\begin{center}
\scalebox{0.5}{
\begin{tikzpicture}
    \newcommand\s{0.4}
    \node[circle,scale=0.4] at (0,0) (0) {};
    \node[circle,scale=0.4] at (1*\s,1) (1) {};
    \node[circle,scale=0.4] at (2*\s,2) (2) {};
    \node[circle,scale=0.4] at (3*\s,3) (3) {};
    \node[circle,scale=0.4] at (4*\s,2) (4) {};
    \node[circle,scale=0.4] at (5*\s,1) (5) {};
    \node[circle,scale=0.4] at (6*\s,2) (6) {};
    \node[circle,scale=0.4] at (7*\s,1) (7) {};
    \node[circle,scale=0.4] at (8*\s,0) (8) {};
    \node[circle,scale=0.4] at (9*\s,-1) (9) {};
    \node[circle,scale=0.4] at (10*\s,-2) (10) {};
    \node[circle,scale=0.4] at (11*\s,-1) (11) {};
    \node[circle,scale=0.4] at (12*\s,0) (12) {};
    \node[circle,scale=0.4] at (13*\s,1) (13) {};
    \node[circle,scale=0.4] at (14*\s,0) (14) {};
    \draw (0)--(1)--(2)--(3)--(4)--(5)--(6)--(7)--(8)--(9)--(10)--(11)--(12)--(13)--(14);
    \draw[dashed] (-1,-2)--(14*\s+1,-2);
    \draw[dashed] (-1,-1)--(14*\s+1,-1);
    \draw[dashed] (-1,0)--(14*\s+1,0);
    \draw[dashed] (-1,1)--(14*\s+1,1);
    \draw[dashed] (-1,2)--(14*\s+1,2);
    \draw[dashed] (-1,3)--(14*\s+1,3);
    \node[] at (-2,-2) {\LARGE $-2$};
    \node[] at (-2,-1) {\LARGE $-1$};
    \node[] at (-2,0) {\LARGE $0$};
    \node[] at (-2,1) {\LARGE $1$};
    \node[] at (-2,2) {\LARGE $2$};
    \node[] at (-2,3) {\LARGE $3$};
\end{tikzpicture}
}
\end{center}
\caption{Illustration of the word $W=RRRLLRLLLLRRRL$. The dashed lines indicate the different elevations.}
\label{figuresequence}
\end{figure}
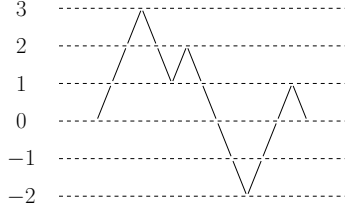

\white
\begin{lemma}
\label{equivmultiset}
If $X$ and $Y$ are balanced words with $X\sim Y$, then $E(X)=E(Y)$.
\end{lemma}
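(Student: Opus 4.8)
The plan is to reduce to the case of a single swap and then track elevations block by block. By Proposition \ref{equivalence}, the hypothesis $X \sim Y$ gives a sequence of words $X = Z_1, Z_2, \ldots, Z_k = Y$ in which consecutive words are related by a swap. A swap only permutes the letters of a word, so each $Z_i$ has the same letter counts as $X$ and is therefore balanced; in particular $E(Z_i)$ is defined for every $i$. Since equality of multisets is transitive, it suffices, by induction on $k$, to prove that $E(X) = E(Y)$ whenever $X$ and $Y$ are related by a single swap.

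So assume $X = W_1 F G W_2$ and $Y = W_1 G F W_2$ with $F$ and $G$ nonempty balanced words. Put $p = l(W_1)$, $f = l(F)$, $g = l(G)$, and $n = l(X) = l(Y)$, and let $(e_0, \ldots, e_n)$ and $(e'_0, \ldots, e'_n)$ be the elevation sequences of $X$ and $Y$. Set $h = e_p$; since $X$ and $Y$ begin with the same prefix $W_1$, also $e'_p = h$. Because $F$ and $G$ are balanced, the partial sums of letter weights over $F$ and over $G$ each vanish, so $e_{p+f} = e_{p+f+g} = h$ and $e'_{p+g} = e'_{p+g+f} = h$: in both words, the subwords $F$ and $G$ ``sit at height $h$'' and return to $h$ at their right ends, and the trailing $W_2$ also starts at height $h$ in both words.

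Now partition $\{0, 1, \ldots, n\}$ into the blocks $\{0, \ldots, p-1\}$, $\{p, \ldots, p+f+g\}$, and $\{p+f+g+1, \ldots, n\}$, and compare the contributions of $X$ and $Y$ to the elevation multiset over each block. On the first and third blocks $X$ and $Y$ read identical letters ($W_1$, respectively $W_2$) and, thanks to $e_{p+f+g} = e'_{p+f+g} = h$, have identical elevations, so these blocks contribute the same multiset for both words. On the middle block, $e_{p+j} = h + e_j(F)$ for $0 \le j \le f$ and $e_{p+f+j} = h + e_j(G)$ for $0 \le j \le g$, so the multiset $\{e_p, \ldots, e_{p+f+g}\}$ is obtained from $(h + E(F)) \uplus (h + E(G))$ by deleting one copy of $h$ (the junction value $e_{p+f}$, which is counted once in each summand since $e_f(F) = e_0(G) = 0$). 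For $Y$ the same computation with the roles of $F$ and $G$ exchanged shows that $\{e'_p, \ldots, e'_{p+g+f}\}$ is obtained from $(h + E(G)) \uplus (h + E(F))$ by deleting one copy of $h$ --- literally the same multiset. Summing the three blocks gives $E(X) = E(Y)$, which closes the induction.

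I do not expect a genuine obstacle here: the argument is essentially bookkeeping. The one point that needs care is avoiding double-counting the vertices where $W_1$, $F$, $G$, and $W_2$ meet when passing from the elevation sequence to the elevation multiset, and --- for the induction to be legitimate --- the observation that every word produced by a swap is again balanced so that $E$ makes sense at each step.
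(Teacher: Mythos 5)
Your proof is correct and follows the same route as the paper: invoke Proposition \ref{equivalence} to reduce to a single swap, then observe that a swap of adjacent balanced blocks merely permutes the entries of the elevation sequence and hence preserves the elevation multiset. The paper states this last step in one sentence, whereas you carry out the block-by-block bookkeeping (including the correct handling of the shared junction value $h$) explicitly; the content is the same.
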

\begin{proof}
By Proposition \ref{equivalence}, there is a sequence of swaps between $X$ and $Y$. A swap does not change the elevation multiset but merely rearranges the entries in the elevation sequence. Therefore $X$ and $Y$ have the same elevation multiset.
\end{proof}

\paragraph{Example:} The words $RRRLLRLL$ and $RRLRRLLL$ are related by a swap of type $(RL,LR)$, so $RRRLLRLL\sim RRLRRLLL$. The table below shows the elevation sequences for both words. The values that switch places in the swap are underlined. The words share the same elevation multiset. See Figure \ref{figuremultiset} for illustration of the two words.

\begin{center}
\begin{tabular}{|c|c|c|}
    \hline
    $W$ & $RRRLLRLL$ & $RRLRRLLL$ \\
    \hline
    $Q(W)$ & $\{0,1,2,\underline{3},2,\underline{1},2,1,0\}$ & $\{0,1,2,\underline{1},2,\underline{3},2,1,0\}$\\
    \hline
    $E(W)$ & $\{0^2,1^3,2^3,3\}$ & $\{0^2,1^3,2^3,3\}$\\
    \hline
\end{tabular}
\end{center}

\begin{figure}[H]
\begin{center}
\scalebox{0.5}{
\begin{tikzpicture}
    \newcommand\s{0.4}
    \node[circle,scale=0.4] at (0,0) (0) {};
    \node[circle,scale=0.4] at (\s,1) (1) {};
    \node[circle,scale=0.4] at (2*\s,2) (2) {};
    \node[circle,scale=0.4] at (3*\s,3) (3) {};
    \node[circle,scale=0.4] at (4*\s,2) (4) {};
    \node[circle,scale=0.4] at (5*\s,1) (5) {};
    \node[circle,scale=0.4] at (6*\s,2) (6) {};
    \node[circle,scale=0.4] at (7*\s,1) (7) {};
    \node[circle,scale=0.4] at (8*\s,0) (8) {};
    \draw (0)--(1)--(2)--(3)--(4)--(5)--(6)--(7)--(8);
    \draw[dashed] (0,2)--(8*\s,2);
    \node[] at (4*\s, -1) {\LARGE $RR(RL)(LR)LL$};
\end{tikzpicture}
    \hspace{2cm}
\begin{tikzpicture}
    \newcommand\s{0.4}
    \node[circle,scale=0.4] at (0,0) (0) {};
    \node[circle,scale=0.4] at (\s,1) (1) {};
    \node[circle,scale=0.4] at (2*\s,2) (2) {};
    \node[circle,scale=0.4] at (3*\s,1) (3) {};
    \node[circle,scale=0.4] at (4*\s,2) (4) {};
    \node[circle,scale=0.4] at (5*\s,3) (5) {};
    \node[circle,scale=0.4] at (6*\s,2) (6) {};
    \node[circle,scale=0.4] at (7*\s,1) (7) {};
    \node[circle,scale=0.4] at (8*\s,0) (8) {};
    \draw (0)--(1)--(2)--(3)--(4)--(5)--(6)--(7)--(8);
    \draw[dashed] (0,2)--(8*\s,2);
    \node[] at (4*\s, -1) {\LARGE $RR(LR)(RL)LL$};
\end{tikzpicture}
}
\end{center}
    \caption{The two words are related by a swap of type $(RL,LR)$. The dashed line is at elevation 2, where the subword $(RL)(LR)$ (in the first word) and $(LR)(RL)$ (in the second word) begins and ends.}
    \label{figuremultiset}
\end{figure}
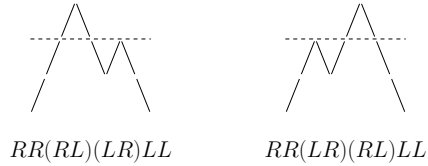

\white
\begin{proposition}
\label{reformulation}
Let $W$ be a balanced word with $l(W)\geq 2$. The following are equivalent:
\begin{enumerate}[label=(\roman*)]
    \item $W$ is prime;
    \item $e_k(W)\neq0$ for $1\leq k\leq l(W)-1$.
\end{enumerate}
\end{proposition}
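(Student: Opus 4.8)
The plan is to deduce this directly from Lemma \ref{primebeginning} by translating the condition ``$a_1a_2\dots a_k$ is balanced'' into the language of elevation. Write $W=a_1a_2\dots a_n$ with $n=l(W)\geq 2$.

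First I would record the elementary bookkeeping observation that for $0\leq k\leq n$, the prefix $a_1a_2\dots a_k$ is balanced if and only if $e_k(W)=0$. Indeed, by definition $e_k(W)=\sum_{i=1}^k \overline{a_i}$ equals the number of letters equal to $R$ among $a_1,\dots,a_k$ minus the number of letters equal to $L$ among them; this difference is zero exactly when those two counts coincide, i.e.\ exactly when $a_1\dots a_k$ is balanced. (For $k=0$ and $k=n$ the value is automatically $0$, since the empty word is balanced and $W$ itself is balanced; this is why only the range $1\leq k\leq n-1$ is relevant.)

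Then I would invoke Lemma \ref{primebeginning}, which says that $W$ is prime if and only if $a_1a_2\dots a_k$ is not balanced for every $k$ with $1\leq k\leq n-1$. Combining this with the observation above, $W$ is prime if and only if $e_k(W)\neq 0$ for all $k$ with $1\leq k\leq n-1=l(W)-1$, which is precisely the asserted equivalence of (i) and (ii).

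I do not expect any genuine obstacle: the substantive content already lives in Lemma \ref{primebeginning}, and what remains is just the identity relating balancedness of a prefix to the vanishing of an elevation. The only point requiring a little care is the index range — one must make sure the endpoints $k=0$ and $k=n$, where the elevation vanishes trivially because $W$ is balanced, are excluded, which is exactly why the hypothesis $l(W)\geq 2$ and the range $1\leq k\leq l(W)-1$ are stated the way they are.
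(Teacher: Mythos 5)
Your proposal is correct and matches the paper's approach exactly: the paper's entire proof is the one-line remark that the proposition is a reformulation of Lemma \ref{primebeginning}, and you have simply spelled out the translation (a prefix is balanced if and only if its elevation vanishes) that makes this reformulation precise. No issues.
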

\begin{proof}
This is a reformulation of Lemma \ref{primebeginning}.
\end{proof}

\white
\begin{corollary}
\label{samesign}
Let $P$ be a prime word. Then one of the following holds:
\begin{enumerate}[label=(\roman*)]
    \item $e_k(P)>0$ for $1\leq k\leq l(P)-1$;
    \item $e_k(P)<0$ for $1\leq k\leq l(P)-1$.
\end{enumerate}
\end{corollary}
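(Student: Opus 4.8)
The plan is to combine Proposition \ref{reformulation} with a discrete intermediate value argument. Since a prime word is nonempty and balanced, it has length at least $2$, so Proposition \ref{reformulation} applies and tells us that $e_k(P)\neq 0$ for every $k$ with $1\leq k\leq l(P)-1$; that is, all the ``interior'' elevations are nonzero. What remains is to rule out the possibility that these nonzero values take mixed signs.

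First I would record the step property of the elevation sequence: writing $P=a_1a_2\cdots a_n$ with $n=l(P)$, we have $e_k(P)-e_{k-1}(P)=\overline{a_k}\in\{1,-1\}$ for each $k$, so consecutive elevations are integers differing by exactly $1$. The key step is then the following: if $1\leq i<j\leq n-1$ and $e_i(P)$ and $e_j(P)$ had opposite signs (one positive, one negative), then as $k$ runs from $i$ to $j$ the integer $e_k(P)$ moves from a positive value to a negative value in steps of size $1$, hence $e_k(P)=0$ for some $k$ with $i<k<j$, so in particular for some $k$ with $1\leq k\leq n-1$. This contradicts what Proposition \ref{reformulation} gave us. Therefore either every interior elevation is positive, which is case (i), or every interior elevation is negative, which is case (ii).

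As for the main obstacle: there really is no hard step here, since the corollary is essentially a repackaging of primality plus the unit-step nature of elevation sequences. The only thing to handle with a little care is the degenerate case $l(P)=2$ (the words $RL$ and $LR$), where there is only the single interior index $k=1$ and the statement holds trivially because one nonzero integer has a definite sign; the argument above is vacuously fine in that case since one cannot choose $i<j$. For orientation one can also remark that $e_1(P)=\overline{a_1}$ decides which of (i), (ii) occurs, matching the picture that a prime word stays strictly on one side of its starting level until the final step.
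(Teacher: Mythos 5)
Your argument is correct and is essentially identical to the paper's proof: both invoke Proposition \ref{reformulation} to get that interior elevations are nonzero, then use the fact that consecutive elevations are integers differing by $1$ to run a discrete intermediate value argument ruling out mixed signs. No issues.
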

\begin{proof}
Elevations are integers, and adjacent elevations $e_k(P)$ and $e_{k+1}(P)$ always differ by 1. If $e_k(P)<0$ and $e_m(P)>0$ for some $k,m$, then $e_l(P)=0$ for some $l$ between $k$ and $m$. This contradicts Proposition \ref{reformulation}.
\end{proof}

\paragraph{Definition (upper and lower primes):} Let $P$ be a prime word. Referring to Corollary \ref{samesign}, if $P$ satisfies (i), then $P$ is called an \textit{upper} prime. Similarly, if $P$ satisfies (ii), then $P$ is called a \textit{lower} prime. See Figure \ref{figureupperlower} for an illustrated example of upper and lower primes.

\begin{figure}[H]
\begin{center}
\scalebox{0.5}{
\begin{tikzpicture}
    \newcommand\s{0.4}
    \node[circle,scale=0.4] at (0,0) (0) {};
    \node[circle,scale=0.4] at (1*\s,1) (1) {};
    \node[circle,scale=0.4] at (2*\s,2) (2) {};
    \node[circle,scale=0.4] at (3*\s,1) (3) {};
    \node[circle,scale=0.4] at (4*\s,2) (4) {};
    \node[circle,scale=0.4] at (5*\s,3) (5) {};
    \node[circle,scale=0.4] at (6*\s,2) (6) {};
    \node[circle,scale=0.4] at (7*\s,3) (7) {};
    \node[circle,scale=0.4] at (8*\s,4) (8) {};
    \node[circle,scale=0.4] at (9*\s,3) (9) {};
    \node[circle,scale=0.4] at (10*\s,2) (10) {};
    \node[circle,scale=0.4] at (11*\s,1) (11) {};
    \node[circle,scale=0.4] at (12*\s,0) (12) {};
    \draw (0)--(1)--(2)--(3)--(4)--(5)--(6)--(7)--(8)--(9)--(10)--(11)--(12);
    \node[] at (6*\s, -1) {\LARGE $RRLRRLRRLLLL$};
\end{tikzpicture}
\hspace{2cm}
\begin{tikzpicture}
    \newcommand\s{0.4}
    \node[circle,scale=0.4] at (0,0) (0) {};
    \node[circle,scale=0.4] at (1*\s,-1) (1) {};
    \node[circle,scale=0.4] at (2*\s,-2) (2) {};
    \node[circle,scale=0.4] at (3*\s,-1) (3) {};
    \node[circle,scale=0.4] at (4*\s,-2) (4) {};
    \node[circle,scale=0.4] at (5*\s,-1) (5) {};
    \node[circle,scale=0.4] at (6*\s,-2) (6) {};
    \node[circle,scale=0.4] at (7*\s,-1) (7) {};
    \node[circle,scale=0.4] at (8*\s,-2) (8) {};
    \node[circle,scale=0.4] at (9*\s,-1) (9) {};
    \node[circle,scale=0.4] at (10*\s,0) (10) {};
    \draw (0)--(1)--(2)--(3)--(4)--(5)--(6)--(7)--(8)--(9)--(10);
    \node[] at (5*\s, -3) {\LARGE $LLRLRLRLRR$};
\end{tikzpicture}
}
\end{center}
\caption{Illustration of the upper prime $RRLRRLRRLLLL$, and the lower prime $LLRLRLRLRR$.}
\label{figureupperlower}
\end{figure}

\paragraph{Note:} An upper prime $P$ starts with the letter $R$ and ends with the letter $L$. Moreover, $P$ can be written as $RZL$ where $Z$ is a (possibly empty) product of upper primes. See Figure \ref{figureupperproduct}.

\begin{figure}[H]
\begin{center}
\scalebox{0.5}{
\begin{tikzpicture}
    \newcommand\s{0.4}
    \node[circle,scale=0.4] at (0,0) (0) {};
    \node[circle,scale=0.4] at (\s,1) (1) {};
    \node[circle,scale=0.4] at (2*\s,2) (2) {};
    \node[circle,scale=0.4] at (3*\s,1) (3) {};
    \node[circle,scale=0.4] at (4*\s,2) (4) {};
    \node[circle,scale=0.4] at (5*\s,3) (5) {};
    \node[circle,scale=0.4] at (6*\s,2) (6) {};
    \node[circle,scale=0.4] at (7*\s,1) (7) {};
    \node[circle,scale=0.4] at (8*\s,0) (8) {};
    \draw (0)--(1)--(2)--(3)--(4)--(5)--(6)--(7)--(8);
    \node[] at (4*\s, 0) {\LARGE $P$};
\end{tikzpicture}
\hspace{2cm}
\begin{tikzpicture}
    \newcommand\s{0.4}
    \node[circle,scale=0.4] at (0,0) (0) {};
    \node[circle,scale=0.4] at (\s,1) (1) {};
    \node[circle,scale=0.4] at (2*\s,2) (2) {};
    \node[circle,scale=0.4] at (3*\s,1) (3) {};
    \node[circle,scale=0.4] at (4*\s,2) (4) {};
    \node[circle,scale=0.4] at (5*\s,3) (5) {};
    \node[circle,scale=0.4] at (6*\s,2) (6) {};
    \node[circle,scale=0.4] at (7*\s,1) (7) {};
    \node[circle,scale=0.4] at (8*\s,0) (8) {};
    \draw (0)--(1)--(2)--(3)--(4)--(5)--(6)--(7)--(8);
    \draw[dashed] (-1,1)--(8*\s+1,1);
    \node[] at (4*\s, 0) {\LARGE $RZL$};
\end{tikzpicture}
\hspace{2cm}
\begin{tikzpicture}
    \newcommand\s{0.4}
    \node[circle,scale=0.4] at (0,1/2) (0) {};
    \node[circle,scale=0.4] at (\s,1) (1) {};
    \node[circle,scale=0.4] at (2*\s,2) (2) {};
    \node[circle,scale=0.4] at (3*\s,1) (3) {};
    \node[circle,scale=0.4] at (4*\s,2) (4) {};
    \node[circle,scale=0.4] at (5*\s,3) (5) {};
    \node[circle,scale=0.4] at (6*\s,2) (6) {};
    \node[circle,scale=0.4] at (7*\s,1) (7) {};
    \draw (1)--(2)--(3)--(4)--(5)--(6)--(7);
    \node[] at (4*\s, 0) {\LARGE $Z$};
\end{tikzpicture}
}
\end{center}
\caption{Visualization of the upper prime $P=RRLRRLLL$, and how it can be written as $RZL$, where $Z=(RL)(RRLL)$. The words $RL$ and $RRLL$ are upper primes.}
\label{figureupperproduct}
\end{figure}

\pagebreak
\section{A minimal generating set}

This section focuses on finding a minimal subset of $S$ that generates $\J$. By Lemma \ref{sprime}, the subset $S'\subset S$ generates $\J$. In this section, two more subsets, $S''$ and $S'''$, are defined, with $S'''\subset S''\subset S'\subset S$. The main result states that $S'''$ is a minimal subset of $S$ that generates $\J$. Because some choices are made when defining $S'''$, this result actually gives a whole family of minimal subsets of $S$ that generate $\J$. Showing that $S''$ generates $\J$ serves as an intermediate result towards the main goal.

\white
\begin{lemma}
\label{upperandlr}
Let $P$ and $Q$ be upper primes. Then $PQ-QP$ is in the ideal generated by
$$\{U(LR)-(LR)U\colon\; U \text{ is an upper prime}\}.$$
\end{lemma}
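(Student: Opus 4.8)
The plan is to prove, by induction on $N = l(P) + l(Q)$, that $PQ - QP$ lies in the ideal $\mathcal{K}$ generated by $\{U(LR) - (LR)U : U \text{ an upper prime}\}$. The main tool is the structural description of upper primes recorded just before Figure \ref{figureupperproduct}: every upper prime $U$ can be written as $U = RZL$, where $Z$ is a (possibly empty) product of upper primes. Two small observations set the stage: $RL$ is the unique shortest upper prime (so the base case $N = 4$ forces $P = Q = RL$, where $PQ - QP = 0$), and $LR$ is itself a \emph{lower} prime, so $LR$ never plays the role of an upper prime — it only appears as the fixed second coordinate of the allowed swap types. By Lemma \ref{idealswap}, $X - Y \in \mathcal{K}$ is equivalent to the existence of a sequence of swaps between $X$ and $Y$, each of type $(U, LR)$ with $U$ an upper prime.

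For the inductive step with $N > 4$, write $P = RZ_PL$ and $Q = RZ_QL$ with $Z_P, Z_Q$ products of upper primes. Then the last letter of $P$ and the first letter of $Q$ form an $LR$ block, so $PQ = RZ_P(LR)Z_QL$ and $QP = RZ_Q(LR)Z_PL$. The first step is to slide this internal $LR$ block rightward, one prime factor of $Z_Q$ (respectively $Z_P$) at a time; each move is a swap of type $(V, LR)$ for an upper prime $V$, hence corresponds to a generator of $\mathcal{K}$. After sliding past all of $Z_Q$ (respectively $Z_P$), the word becomes $(RZ_PZ_QL)(RL)$ (respectively $(RZ_QZ_PL)(RL)$). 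By Lemma \ref{idealswap}, $PQ - (RZ_PZ_QL)(RL) \in \mathcal{K}$ and $QP - (RZ_QZ_PL)(RL) \in \mathcal{K}$, so it remains to show $(RZ_PZ_QL)(RL) - (RZ_QZ_PL)(RL) \in \mathcal{K}$; since this difference equals $R(Z_PZ_Q - Z_QZ_P)L(RL)$ and $\mathcal{K}$ is a two-sided ideal, it is enough to prove $Z_PZ_Q - Z_QZ_P \in \mathcal{K}$.

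To finish, write $Z_P = U_1 \cdots U_m$ and $Z_Q = V_1 \cdots V_n$ with each $U_i$ and $V_j$ an upper prime. Each $U_i$ is a prime factor of $Z_P$, so $l(U_i) \le l(Z_P) = l(P) - 2$, and similarly $l(V_j) \le l(Q) - 2$; hence $l(U_i) + l(V_j) \le N - 4 < N$, and the induction hypothesis applies to the pair $(U_i, V_j)$, giving $U_iV_j - V_jU_i \in \mathcal{K}$ for all $i, j$. A bubble-sort argument then yields the claim: moving each $U_i$ past each $V_j$ one adjacent transposition at a time changes the current word only by an element of $\mathcal{K}$ (obtained from $U_iV_j - V_jU_i \in \mathcal{K}$ by left- and right-multiplying with the surrounding words), and after all transpositions the word $U_1\cdots U_mV_1\cdots V_n$ has become $V_1\cdots V_nU_1\cdots U_m$; therefore $Z_PZ_Q - Z_QZ_P \in \mathcal{K}$.

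I expect the main obstacle to be the reduction in the second paragraph: recognizing that it suffices to slide the internal $LR$ block past just one of the two inner factors, and that what remains then has the convenient shape $(\text{word})(RL)$, so that the problem collapses to commuting $Z_P$ and $Z_Q$ — which are strictly shorter than $P$ and $Q$ because the outer $R(\cdot)L$ has been stripped off, which is exactly what makes the induction close. Checking the degenerate cases ($Z_P$ or $Z_Q$ empty, e.g.\ $P$ or $Q$ equal to $RL$) is routine and is absorbed by the same argument, with empty products contributing the zero element.
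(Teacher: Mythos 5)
Your proof is correct and follows essentially the same route as the paper's: decompose $P=RZ_PL$ and $Q=RZ_QL$, slide the internal $LR$ block to the end using swaps of type $(\text{upper prime},LR)$, and then commute the inner prime factors pairwise by induction on total length. The only cosmetic difference is that the paper phrases everything as one chain of swaps $PQ\leftrightarrow\cdots\leftrightarrow QP$ (moving $LR$ right, permuting the inner factors, then moving $LR$ back), whereas you meet in the middle and argue via telescoping differences in the ideal; the substance is identical.
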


\begin{proof}
By Lemma \ref{idealswap}, it is enough to show that there is a sequence of swaps between $PQ$ and $QP$ where every swap is of type $(U,LR)$ for some upper prime $U$. In this proof, these kinds of swaps are called ``upper swaps''. The proof is an induction on $l(PQ)$. As $P$ and $Q$ are prime words, both are nonempty balanced words so both have length $\geq 2$. Therefore the smallest possible case is $l(PQ)=4$, with $P=Q=RL$. In this case $PQ=QP$ so the claim is true because no swaps are needed.

Next, assume that $l(PQ)\geq 6$. There exist nonnegative integers $p,q$ and upper primes $P_1,\dots, P_p$ and $Q_1,\cdots Q_q$ so that $P=RP_1\cdots P_p L$ and $Q=RQ_1\dots Q_q L$. The diagram below describes how the sequence of swaps between $PQ$ and $QP$ can be found. Each arrow in the diagram represents a sequence of swaps. The top arrow involves swaps of type $(Q_i,LR)$, and the bottom arrow swaps of type $(P_i,LR)$. Both of these are upper swaps.

The middle arrow involves swaps of type $(P_i,Q_j)$; notice that $l(P_iQ_j)\leq n-4$ for any $i\in \{1,\dots, p\}$ and $j\in \{1,\dots, q\}$. Therefore, by induction, there exists a sequence of swaps between $P_iQ_j$ and $Q_jP_i$ where every swap is an upper swap. This means that each swap of type $(P_i,Q_j)$ can be replaced by a sequence of upper swaps.

\begin{center}
\begin{tikzcd}
PQ=RP_1\dots P_p (LR)Q_1\dots Q_q L \arrow[d, leftrightarrow,"\text{\quad swaps of type } {(Q_i,LR)}"] \\
RP_1\dots P_pQ_1\dots Q_q (LR) L \arrow[d, leftrightarrow, "\text{\quad swaps of type } {(P_i,Q_j)} \text { (induction)}"] \\
RQ_1\dots Q_q P_1\dots P_p (LR) L \arrow[d, leftrightarrow,"\text{\quad swaps of type } {(P_i,LR)}"] \\
QP=RQ_1\dots Q_q (LR) P_1\dots P_p L
\end{tikzcd}
\end{center}

The three arrows together give the desired sequence of swaps between $PQ$ and $QP$, which completes the proof.
\end{proof}

\paragraph{Note:} The following figure gives a ``picture proof'' of Lemma \ref{upperandlr} in the case where $p=2$ and $q=1$.

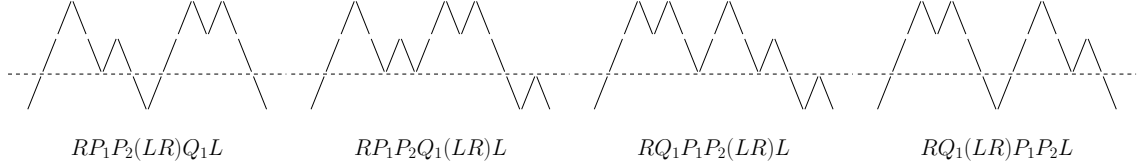
\begin{figure}[H]
\begin{center}
\scalebox{0.5}{
\begin{tikzpicture}
    \newcommand\s{0.4}
    \node[circle,scale=0.4] at (0,0) (0) {};
    \node[circle,scale=0.4] at (1*\s,1) (1) {};
    \node[circle,scale=0.4] at (2*\s,2) (2) {};
    \node[circle,scale=0.4] at (3*\s,3) (3) {};
    \node[circle,scale=0.4] at (4*\s,2) (4) {};
    \node[circle,scale=0.4] at (5*\s,1) (5) {};
    \node[circle,scale=0.4] at (6*\s,2) (6) {};
    \node[circle,scale=0.4] at (7*\s,1) (7) {};
    \node[circle,scale=0.4] at (8*\s,0) (8) {};
    \node[circle,scale=0.4] at (9*\s,1) (9) {};
    \node[circle,scale=0.4] at (10*\s,2) (10) {};
    \node[circle,scale=0.4] at (11*\s,3) (11) {};
    \node[circle,scale=0.4] at (12*\s,2) (12) {};
    \node[circle,scale=0.4] at (13*\s,3) (13) {};
    \node[circle,scale=0.4] at (14*\s,2) (14) {};
    \node[circle,scale=0.4] at (15*\s,1) (15) {};
    \node[circle,scale=0.4] at (16*\s,0) (16) {};
    \draw (0)--(1)--(2)--(3)--(4)--(5)--(6)--(7)--(8)--(9)--(10)--(11)--(12)--(13)--(14)--(15)--(16);
    \draw[dashed] (-1/2,1)--(16*\s+1/2,1);
    \node[] at (8*\s, -1) {\LARGE $RP_1P_2(LR)Q_1L$};
\end{tikzpicture}
\begin{tikzpicture}
    \newcommand\s{0.4}
    \node[circle,scale=0.4] at (0,0) (0) {};
    \node[circle,scale=0.4] at (1*\s,1) (1) {};
    \node[circle,scale=0.4] at (2*\s,2) (2) {};
    \node[circle,scale=0.4] at (3*\s,3) (3) {};
    \node[circle,scale=0.4] at (4*\s,2) (4) {};
    \node[circle,scale=0.4] at (5*\s,1) (5) {};
    \node[circle,scale=0.4] at (6*\s,2) (6) {};
    \node[circle,scale=0.4] at (7*\s,1) (7) {};
    \node[circle,scale=0.4] at (8*\s,2) (8) {};
    \node[circle,scale=0.4] at (9*\s,3) (9) {};
    \node[circle,scale=0.4] at (10*\s,2) (10) {};
    \node[circle,scale=0.4] at (11*\s,3) (11) {};
    \node[circle,scale=0.4] at (12*\s,2) (12) {};
    \node[circle,scale=0.4] at (13*\s,1) (13) {};
    \node[circle,scale=0.4] at (14*\s,0) (14) {};
    \node[circle,scale=0.4] at (15*\s,1) (15) {};
    \node[circle,scale=0.4] at (16*\s,0) (16) {};
    \draw (0)--(1)--(2)--(3)--(4)--(5)--(6)--(7)--(8)--(9)--(10)--(11)--(12)--(13)--(14)--(15)--(16);
    \draw[dashed] (-1/2,1)--(16*\s+1/2,1);
    \node[] at (8*\s, -1) {\LARGE $RP_1P_2Q_1(LR)L$};
\end{tikzpicture}
\begin{tikzpicture}
    \newcommand\s{0.4}
    \node[circle,scale=0.4] at (0,0) (0) {};
    \node[circle,scale=0.4] at (1*\s,1) (1) {};
    \node[circle,scale=0.4] at (2*\s,2) (2) {};
    \node[circle,scale=0.4] at (3*\s,3) (3) {};
    \node[circle,scale=0.4] at (4*\s,2) (4) {};
    \node[circle,scale=0.4] at (5*\s,3) (5) {};
    \node[circle,scale=0.4] at (6*\s,2) (6) {};
    \node[circle,scale=0.4] at (7*\s,1) (7) {};
    \node[circle,scale=0.4] at (8*\s,2) (8) {};
    \node[circle,scale=0.4] at (9*\s,3) (9) {};
    \node[circle,scale=0.4] at (10*\s,2) (10) {};
    \node[circle,scale=0.4] at (11*\s,1) (11) {};
    \node[circle,scale=0.4] at (12*\s,2) (12) {};
    \node[circle,scale=0.4] at (13*\s,1) (13) {};
    \node[circle,scale=0.4] at (14*\s,0) (14) {};
    \node[circle,scale=0.4] at (15*\s,1) (15) {};
    \node[circle,scale=0.4] at (16*\s,0) (16) {};
    \draw (0)--(1)--(2)--(3)--(4)--(5)--(6)--(7)--(8)--(9)--(10)--(11)--(12)--(13)--(14)--(15)--(16);
    \draw[dashed] (-1/2,1)--(16*\s+1/2,1);
    \node[] at (8*\s, -1) {\LARGE $RQ_1P_1P_2(LR)L$};
\end{tikzpicture}
\begin{tikzpicture}
    \newcommand\s{0.4}
    \node[circle,scale=0.4] at (0,0) (0) {};
    \node[circle,scale=0.4] at (1*\s,1) (1) {};
    \node[circle,scale=0.4] at (2*\s,2) (2) {};
    \node[circle,scale=0.4] at (3*\s,3) (3) {};
    \node[circle,scale=0.4] at (4*\s,2) (4) {};
    \node[circle,scale=0.4] at (5*\s,3) (5) {};
    \node[circle,scale=0.4] at (6*\s,2) (6) {};
    \node[circle,scale=0.4] at (7*\s,1) (7) {};
    \node[circle,scale=0.4] at (8*\s,0) (8) {};
    \node[circle,scale=0.4] at (9*\s,1) (9) {};
    \node[circle,scale=0.4] at (10*\s,2) (10) {};
    \node[circle,scale=0.4] at (11*\s,3) (11) {};
    \node[circle,scale=0.4] at (12*\s,2) (12) {};
    \node[circle,scale=0.4] at (13*\s,1) (13) {};
    \node[circle,scale=0.4] at (14*\s,2) (14) {};
    \node[circle,scale=0.4] at (15*\s,1) (15) {};
    \node[circle,scale=0.4] at (16*\s,0) (16) {};
    \draw (0)--(1)--(2)--(3)--(4)--(5)--(6)--(7)--(8)--(9)--(10)--(11)--(12)--(13)--(14)--(15)--(16);
    \draw[dashed] (-1/2,1)--(16*\s+1/2,1);
    \node[] at (8*\s, -1) {\LARGE $RQ_1(LR)P_1P_2L$};
\end{tikzpicture}
}
\end{center}
    \caption{Illustration of the swaps in the proof of Lemma \ref{upperandlr}, for $p=2$ and $q=1$. First $LR$ is moved past $Q_1$, then the $Q_1$ is moved past both $P_2$ and $P_1$ (this is where induction is used), and finally $LR$ is moved back to the center.}
    \label{figurepictureproof}
\end{figure}

\white
\begin{lemma}
\label{lowerandrl}
Let $P$ and $Q$ be lower primes. Then $PQ-QP$ is in the ideal generated by
$$\{(RL)D-D(RL)\colon\; D \text{ is a lower prime}\}.$$
\end{lemma}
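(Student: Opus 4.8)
The plan is to obtain this as the left--right mirror image of Lemma \ref{upperandlr}, rather than repeating the induction. Let $\phi\colon\A\to\A$ be the algebra automorphism determined by $\phi(L)=R$ and $\phi(R)=L$; it is well defined since it permutes the two generators, and it satisfies $\phi^2=\mathrm{id}$, so it is an involutive automorphism of $\A$. I would first record how $\phi$ interacts with the notions in play. For a letter $a$ we have $\overline{\phi(a)}=-\overline{a}$, so for every balanced word $W=a_1\cdots a_n$ and every $k$ we get $e_k(\phi(W))=-e_k(W)$; in particular $\phi(W)$ is balanced, and by Proposition \ref{reformulation} it is prime precisely when $W$ is. Together with the dichotomy of Corollary \ref{samesign}, this shows that $\phi$ restricts to a bijection from the set of upper primes onto the set of lower primes. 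Also $\phi(LR)=RL$, and, because $\phi$ is a two-sided algebra automorphism, $\phi$ sends the ideal generated by a set $\Sigma$ onto the ideal generated by $\phi(\Sigma)$.

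With these observations the deduction is immediate. Let $P$ and $Q$ be lower primes, so that $\phi(P)$ and $\phi(Q)$ are upper primes. By Lemma \ref{upperandlr}, the element
$$\phi(PQ-QP)=\phi(P)\phi(Q)-\phi(Q)\phi(P)$$
lies in the ideal $\mathcal{K}$ generated by $\{U(LR)-(LR)U\colon U\text{ is an upper prime}\}$. Applying $\phi$ once more and using $\phi^2=\mathrm{id}$, we conclude that $PQ-QP$ lies in $\phi(\mathcal{K})$, which is the ideal generated by the set $\{\phi(U)(RL)-(RL)\phi(U)\colon U\text{ is an upper prime}\}$. Since $\phi$ carries the upper primes bijectively onto the lower primes, this last set is exactly $\{D(RL)-(RL)D\colon D\text{ is a lower prime}\}$, and as $D(RL)-(RL)D=-\bigl((RL)D-D(RL)\bigr)$ the ideal it generates coincides with the ideal generated by $\{(RL)D-D(RL)\colon D\text{ is a lower prime}\}$. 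This is the desired conclusion.

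I do not expect any real obstacle: the content is entirely in Lemma \ref{upperandlr}, and the only points needing care are that $\phi$ negates all elevations (hence swaps upper and lower primes via Corollary \ref{samesign}) and that an algebra automorphism carries a set of ideal generators to a set of ideal generators. If a self-contained argument were preferred, one could instead mimic the proof of Lemma \ref{upperandlr} line by line, writing a lower prime as $LZR$ with $Z$ a product of lower primes and running the analogous three-step swap diagram with $RL$ playing the role of $LR$; but the symmetry argument is shorter and records that Lemma \ref{lowerandrl} adds nothing genuinely new.
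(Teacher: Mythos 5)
Your argument is correct, and it takes a different (and in my view cleaner) route than the paper, whose entire proof of this lemma is the single sentence ``Similar to Lemma \ref{upperandlr}'' --- that is, the reader is expected to rerun the induction with a lower prime written as $LZR$ for $Z$ a product of lower primes, and with $RL$ playing the role of $LR$. You instead transport the statement of Lemma \ref{upperandlr} through the involutive automorphism $\phi$ exchanging $L$ and $R$, and every step you need checks out: $\phi$ negates all elevations, so by Proposition \ref{reformulation} and Corollary \ref{samesign} it restricts to a bijection between upper and lower primes; $\phi(LR)=RL$; an algebra automorphism carries the ideal generated by $\Sigma$ onto the ideal generated by $\phi(\Sigma)$; and the final sign flip $D(RL)-(RL)D=-\bigl((RL)D-D(RL)\bigr)$ does not change the ideal generated. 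What the paper's approach buys is self-containedness at the cost of silently duplicated work; what yours buys is a short, rigorous derivation plus a reusable symmetry principle (the same $\phi$ would let one transport any upper-prime statement in the paper to its lower-prime counterpart). One cosmetic quibble: in the paper's graphical convention the substitution $L\leftrightarrow R$ is an up--down reflection of the elevation picture rather than a ``left--right mirror image'' (left--right reversal corresponds to the word-reversal \emph{anti}-automorphism, which would also work here but needs slightly different bookkeeping); this does not affect your proof.
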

\begin{proof}
Similar to Lemma \ref{upperandlr}.
\end{proof}

\paragraph {Yet another generating set:} Define
$$S''=\{UD-DU\colon \; U \text{ is an upper prime, } D \text{ is a lower prime}\}.$$
Note that $S''\subset S'\subset S$.

\white
\begin{proposition}
\label{sdoubleprime}
The set $S''$ generates $\J$.
\end{proposition}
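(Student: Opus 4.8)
The plan is to deduce this from Lemma \ref{sprime}, which says that $S'$ generates $\J$, together with Lemmas \ref{upperandlr} and \ref{lowerandrl}. Since every upper prime and every lower prime is in particular a prime word, we have $S''\subseteq S'\subseteq S$, so the ideal generated by $S''$ is automatically contained in $\J$. The content is therefore the reverse inclusion, and for that it suffices to show that every generator $PQ-QP$ of $S'$ (with $P$ and $Q$ prime words) lies in the ideal $\mathcal{K}$ generated by $S''$.

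By Corollary \ref{samesign}, each of $P$ and $Q$ is either an upper prime or a lower prime, so there are three cases. If one of $P,Q$ is upper and the other is lower, then $PQ-QP$ is literally an element of $S''\subseteq\mathcal{K}$. If both $P$ and $Q$ are upper, Lemma \ref{upperandlr} puts $PQ-QP$ in the ideal generated by $\{U(LR)-(LR)U:\ U\text{ is an upper prime}\}$; the key observation is that $LR$ is itself a lower prime, since it has length $2$ and its only interior elevation is $e_1(LR)=\overline{L}=-1<0$, so by Proposition \ref{reformulation} and Corollary \ref{samesign} it is a lower prime. Hence each generator $U(LR)-(LR)U$ belongs to $S''$, and so $PQ-QP\in\mathcal{K}$. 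The remaining case, with both $P$ and $Q$ lower, is symmetric: Lemma \ref{lowerandrl} puts $PQ-QP$ in the ideal generated by $\{(RL)D-D(RL):\ D\text{ is a lower prime}\}$, and $RL$ is an upper prime because $e_1(RL)=\overline{R}=1>0$, so again these generators lie in $S''$.

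Combining the three cases, $\mathcal{K}$ contains every generator of $S'$, hence contains the ideal $\J$ generated by $S'$; together with the trivial inclusion $\mathcal{K}\subseteq\J$ this gives $\mathcal{K}=\J$. I do not expect a serious obstacle here, since the real work has already been carried out in Lemmas \ref{upperandlr} and \ref{lowerandrl}; the only point to notice is that $LR$ and $RL$ are, respectively, a lower prime and an upper prime, which is exactly what forces the "mixed" commutators produced by those lemmas to land in $S''$.
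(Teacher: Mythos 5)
Your proof is correct and takes essentially the same approach as the paper's: reduce to showing $S'\subset\J''$ via Lemma \ref{sprime}, split into cases according to whether $P$ and $Q$ are upper or lower primes, and invoke Lemmas \ref{upperandlr} and \ref{lowerandrl} together with the observation that $LR$ is a lower prime and $RL$ an upper prime, so the resulting generators lie in $S''$. One tiny imprecision: when $P$ is lower and $Q$ is upper, $PQ-QP$ is the \emph{negative} of an element of $S''$ rather than literally an element of it, but it still lies in the ideal, so the argument stands.
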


\begin{proof}
Let $\J''$ denote the ideal generated by $S''$. The goal is to show that $\J=\J''$. The inclusion $\J''\subset\J$ follows from the fact that $S''\subset S$. By Lemma \ref{sprime}, the set $S'$ generates $\J$, so for the reverse inclusion, it is enough to show that $S'\subset \J''$. This translates to the following claim: if $P$ and $Q$ are prime words, then $PQ-QP\in\J''$. There are four cases, depending on whether $P$ and $Q$ are upper or lower primes. The table below shows why $PQ-QP\in \J''$ in each of the cases.

\begin{center}
\begin{tabular}{|m{2.4cm}|m{5.9cm}|m{5.9cm}|}
    \hline
    & $Q$ upper prime & $Q$ lower prime  \\
    \hline
    $P$ upper prime & By Lemma \ref{upperandlr}, $PQ-QP$ is in the ideal generated by elements of the type $U(LR)-(LR)U$ where $U$ is an upper prime, and these are all elements of $S''$.& $PQ-QP\in S''\subset \J''$.\\
    \hline
    $P$ lower prime & Combining $PQ-QP=-(QP-PQ)$ and the fact that $QP-PQ\in S''$ gives $PQ-QP\in \J''$. & By Lemma \ref{lowerandrl}, $PQ-QP$ is in the ideal generated by elements of the type $(RL)D-D(RL)$ where $D$ is a lower prime, and these are all elements of $S''$.\\
    \hline
\end{tabular}    
\end{center}
Note that an ideal generated by elements of $S''$ is a subset of $\J''$. It has now been shown that $PQ-QP\in \J''$, which concludes the proof.
\end{proof}

\white
\begin{samepage}
\begin{corollary}
\label{upperlowerswap}
Let $X$ and $Y$ be any words. The following are equivalent:
\begin{enumerate}[label=(\roman*)]
    \item $X\sim Y$.
    \item There is a sequence of swaps between $X$ and $Y$, where every swap is of type $(U,D)$ for some upper prime $U$ and a lower prime $D$.
\end{enumerate}
\end{corollary}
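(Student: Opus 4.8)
The plan is to obtain this as an immediate consequence of Lemma~\ref{idealswap} together with Proposition~\ref{sdoubleprime}, in exactly the same spirit as the deduction of Proposition~\ref{equivalence} from Lemma~\ref{idealswap}. First I would unwind the definitions: by the definition of the relation $\sim$, the statement $X\sim Y$ is equivalent to $X-Y\in\J$. By Proposition~\ref{sdoubleprime}, the ideal $\J$ coincides with the ideal generated by $S''=\{UD-DU\colon U\text{ an upper prime},\ D\text{ a lower prime}\}$, so $X\sim Y$ is equivalent to $X-Y$ lying in the ideal generated by $S''$.

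Next I would apply Lemma~\ref{idealswap} with the index set $I$ taken to be the set of all pairs $(U,D)$ where $U$ is an upper prime and $D$ is a lower prime, and with $F_{(U,D)}=U$, $G_{(U,D)}=D$. Every upper prime and every lower prime is in particular a nonempty balanced word, so the hypotheses of Lemma~\ref{idealswap} are satisfied, and the family $\{F_iG_i-G_iF_i\}_{i\in I}$ is precisely $S''$. Lemma~\ref{idealswap} then asserts that $X-Y$ lies in the ideal generated by $S''$ if and only if there is a sequence of swaps between $X$ and $Y$ in which every swap is of type $(U,D)$ for some upper prime $U$ and lower prime $D$. Chaining this with the equivalence from the previous paragraph yields the equivalence of (i) and (ii).

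I expect essentially no obstacle here: all the substantive work is already contained in Proposition~\ref{sdoubleprime} (which rests on Lemmas~\ref{upperandlr} and \ref{lowerandrl}), and what remains is only the bookkeeping needed to verify that Lemma~\ref{idealswap} applies to this particular index set. The one point worth noting explicitly is that swaps of type $(U,D)$ are genuine swaps in the sense of the definition, which holds because $U$ and $D$ are nonempty balanced words; this is what makes the phrase "sequence of swaps" in (ii) meaningful.
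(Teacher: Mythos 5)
Your proposal is correct and follows essentially the same route as the paper, which simply cites Lemma~\ref{idealswap} and Proposition~\ref{sdoubleprime} (the paper also mentions Proposition~\ref{equivalence}, but your direct unwinding of $X\sim Y$ as $X-Y\in\J$ accomplishes the same thing). Your version just spells out the bookkeeping that the paper leaves implicit.
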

\end{samepage}
\begin{proof}
Use Lemma \ref{idealswap} and Proposition \ref{equivalence}, together with the fact that $S''$ generates $\J$ from Proposition \ref{sdoubleprime}.
\end{proof}

\white
\begin{lemma}
\label{insidelemma}
Let $U$ be an upper prime, $D$ a lower prime, and $W$ a balanced word. If $UD$ and $W$ are related by a swap, then exactly one of the following holds:
\begin{enumerate}[label=(\roman*)]
    \item $W=DU$;
    \item $W=U'D$ where $U$ and $U'$ are related by a swap;
    \item $W=UD'$ where $D$ and $D'$ are related by a swap. 
\end{enumerate}
\end{lemma}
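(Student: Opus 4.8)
The plan is to analyze directly what it means for $UD$ and $W$ to be related by a swap, using the definition of swap together with the structure of upper and lower primes. Recall that a swap of type $(F,G)$ means we can write $UD = W_1 FG W_2$ and $W = W_1 GF W_2$, where $F,G$ are nonempty balanced words. Since $|UD| = |W|$, the word $W$ is obtained from $UD$ by locating two adjacent balanced subwords $F,G$ occupying a contiguous block of positions and transposing them. So the entire analysis comes down to: where can the block $FG$ sit inside $UD$?

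First I would set up coordinates. Write $U = a_1\cdots a_m$ and $D = b_1\cdots b_n$, so $UD$ occupies positions $1,\dots,m+n$, with $U$ in positions $1,\dots,m$ and $D$ in positions $m+1,\dots,m+n$. The block $FG$ occupies positions $p+1,\dots,q$ for some $0\le p < q \le m+n$, and moreover the split between $F$ and $G$ is at some position $r$ with $p < r < q$; crucially, because $F$ and $FG$ are balanced, position $p$ and position $r$ and position $q$ all have the same elevation $e_p(UD)=e_r(UD)=e_q(UD)$. The key structural fact I want to exploit is Proposition~\ref{reformulation} / Corollary~\ref{samesign}: inside the prime $U$, the elevation $e_k(UD)$ is strictly positive for $1\le k\le m-1$; at $k=0$ and $k=m$ it is $0$ (since $U$ is balanced); and inside $D$, which is a lower prime, $e_k(UD) = e_m(UD) + e_{k-m}(D) = e_{k-m}(D)$ is strictly negative for $m+1\le k\le m+n-1$, and $0$ at $k=m$ and $k=m+n$. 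So the only positions in $UD$ with elevation $0$ are $0$, $m$, and $m+n$.

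Now I case on where $p$, $r$, $q$ fall relative to the "seam" at position $m$. Since $e_p = e_r = e_q$, the three values $e_p,e_r,e_q$ are equal; but if the block $FG$ straddles the seam (i.e. $p < m < q$) then at least one of $p,r,q$ lies strictly inside $U$ (elevation $>0$) while another lies strictly inside $D$ (elevation $<0$), or one of them equals $m$. The possibilities: if the common elevation is $0$, then each of $p,r,q\in\{0,m,m+n\}$, forcing $p=0,r=m,q=m+n$, i.e. $F=U$, $G=D$, and then $W = GF = DU$, giving case (i). If the common elevation is positive, then none of $p,r,q$ can be $\ge m$ (positions $\ge m$ have elevation $\le 0$), so the whole block $FG$ lies within positions $1,\dots,m-1$... wait, more carefully: $p$ could be $0$ only if elevation is $0$, so actually $p\ge 1$, and $q\le m-1$; hence $FG$ is a balanced subword strictly inside $U$, and $W = (\text{modified } U)\,D$ where the modification is exactly a swap of type $(F,G)$ performed inside $U$ — this is case (ii). Symmetrically, if the common elevation is negative, the block lies strictly inside $D$ (positions $m+1,\dots,m+n-1$), giving case (iii). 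The main obstacle is being careful that the three cases are genuinely exhaustive and mutually exclusive: exhaustiveness follows because the common elevation is $>0$, $=0$, or $<0$; exclusivity follows because in case (i) the block is all of $UD$, in case (ii) it sits strictly left of the seam, in case (iii) strictly right, and these cannot coincide (e.g. a swap strictly inside $U$ cannot produce $DU$ unless $U$ were empty, which it is not). I would also note that in case (ii), $U'$ is automatically balanced of the same length as $U$, and since the swap only permutes elevations and leaves $e_k>0$ for $0<k<m$ untouched in the sense that... actually I only need that $U$ and $U'$ are related by a swap, which is immediate from the construction, and similarly for (iii); whether $U'$ is prime is not asserted and need not be checked.

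Finally I would remark that "related by a swap" in cases (ii) and (iii) is to be understood as: $U,U'$ (resp. $D,D'$) are related by a swap in the sense of the definition, which is exactly what the coordinate description gives. A short sentence observing that $r$ exists and is strictly between $p$ and $q$ because both $F$ and $G$ are nonempty closes the argument.
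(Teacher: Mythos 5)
Your proposal is correct and follows essentially the same route as the paper: both arguments track the elevation profile of $UD$ (positive strictly inside $U$, negative strictly inside $D$, zero only at the three positions $0$, $l(U)$, $l(UD)$) and case on the elevation at which the swapped block $FG$ begins, with the zero case forcing $F=U$, $G=D$. Your version just adds slightly more explicit coordinates and a remark on mutual exclusivity, which the paper leaves implicit.
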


\begin{proof}
Because $UD$ and $W$ are related by a swap, there are nonempty balanced words $F,G$ and words $W_1,W_2$ so that $UD=W_1FGW_2$ and $W=W_1GFW_2$. Note that the product $FG$ starts and ends at the same elevation of $UD$. (In other words, if $a=l(W_1)$ and $b=l(FG)$, then $e_a(UD)=e_{a+b}(UD)$.) The table below shows how the elevation $e_k(UD)$ behaves for $0\leq k\leq l(UD)$.

\begin{center}
\begin{tabular}{|*{6}{c|}}
    \hline
    $k$ & 0 & $1,\dots, l(U)-1$ & $l(U)$ & $l(U)+1,\dots, l(U)+l(D)-1$ & $l(U)+l(D)=l(UD)$ \\
    \hline
    $e_k(UD)$ & 0 & $>0$ & 0 & $<0$ & 0 \\
    \hline
\end{tabular}
\end{center}

If $FG$ starts at elevation zero, then both $F$ and $G$ start and end at elevation zero. In this case, the only possibility is $U=F$ and $D=G$, because elevation zero appears exactly $3$ times in $UD$. This is case (i). If $FG$ starts at positive elevation, then $FG$ is a subword of $U$, which gives case (ii). If $FG$ starts at negative elevation, then $FG$ is a subword of $D$, which gives case (iii).
\end{proof}

\paragraph{Note:} The picture below illustrates the table in the proof of Lemma \ref{insidelemma}.

\begin{figure}[H]
\begin{center}
\scalebox{0.5}{
\begin{tikzpicture}
    \newcommand\s{0.4}
    \node[circle,fill = black] at (0,0) (0) {};
    \node[circle,scale=0.4] at (1*\s,1) (1) {};
    \node[circle,scale=0.4] at (2*\s,2) (2) {};
    \node[circle,scale=0.4] at (3*\s,3) (3) {};
    \node[circle,scale=0.4] at (4*\s,2) (4) {};
    \node[circle,scale=0.4] at (5*\s,1) (5) {};
    \node[circle,scale=0.4] at (6*\s,2) (6) {};
    \node[circle,scale=0.4] at (7*\s,1) (7) {};
    \node[circle,fill = black] at (8*\s,0) (8) {};
    \node[circle,scale=0.4] at (9*\s,-1) (9) {};
    \node[circle,scale=0.4] at (10*\s,-2) (10) {};
    \node[circle,scale=0.4] at (11*\s,-1) (11) {};
    \node[circle,scale=0.4] at (12*\s,-2) (12) {};
    \node[circle,scale=0.4] at (13*\s,-1) (13) {};
    \node[circle,scale=0.4] at (14*\s,-2) (14) {};
    \node[circle,scale=0.4] at (15*\s,-1) (15) {};
    \node[circle,fill = black] at (16*\s,0) (16) {};
    \draw (0)--(1)--(2)--(3)--(4)--(5)--(6)--(7)--(8)--(9)--(10)--(11)--(12)--(13)--(14)--(15)--(16);
    \draw[dashed] (-1,0)--(16*\s+1,0);
\end{tikzpicture}
}
\end{center}
    \caption{Illustration of $UD=(RRRLLRLL)(LLRLRLRR)$. The dashed line represents elevation zero. Elevation is positive inside $U$, negative inside $D$, and zero for exactly three indices; the zero elevation locations are marked with a black dot.}
    \label{figureud}
\end{figure}
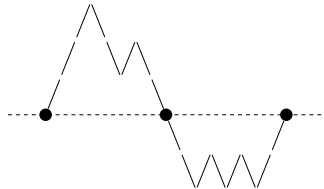

\white
\begin{lemma}
\label{prime-equiv}
The equivalence class of any upper prime consists entirely of upper primes of the same length, and the equivalence class of any lower prime consists entirely of lower primes of the same length.
\end{lemma}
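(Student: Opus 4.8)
The plan is to use Corollary \ref{upperlowerswap}, which tells us that any word equivalent to a given word $X$ can be reached from $X$ by a sequence of swaps of type $(U,D)$ with $U$ an upper prime and $D$ a lower prime. Since equivalence is generated by single swaps, it suffices to show that if $P$ is an upper prime and $P$ is related to a word $W$ by a single swap of type $(U,D)$, then $W$ is again an upper prime of the same length; the lower prime case is entirely symmetric (reflect words, which exchanges $L\swap R$ and turns upper primes into lower primes). Length preservation is immediate since a swap never changes the length of a word, so the real content is that primality and the ``upper'' property are preserved.

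First I would record, via Proposition \ref{reformulation}, that an upper prime $P$ of length $n\geq 2$ is exactly a balanced word with $e_k(P)>0$ for $1\leq k\leq n-1$ (using Corollary \ref{samesign} to pin down the sign, since $e_1(P)=1>0$ forces case (i)). Now suppose $P$ is related to $W$ by a swap, so $P=W_1FGW_2$ and $W=W_1GFW_2$ with $F,G$ nonempty balanced words. The key observation is that $F$ and $G$ are balanced, so the subword $FG$ starts and ends at the same elevation inside $P$: writing $a=l(W_1)$ and $b=l(FG)$, we have $e_a(P)=e_{a+b}(P)$, and since elevation zero occurs in $P$ only at the two endpoints, this common elevation is positive unless $a=0$ and $a+b=n$, i.e. unless $W_1$ and $W_2$ are empty and $FG=P$. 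In that exceptional case $F$ and $G$ are themselves balanced words with $FG=P$, contradicting primality of $P$; so in fact $FG$ lies strictly inside $P$ at a positive elevation throughout.

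The main step is then to check that swapping $F$ and $G$ inside $P$ keeps all interior elevations positive. By Lemma \ref{equivmultiset} (applied to the balanced words $FG$ and $GF$, which are related by a single swap, hence equivalent) the elevation multiset of $FG$ equals that of $GF$; more concretely, the elevation sequence of $W$ is obtained from that of $P$ by permuting the entries strictly between positions $a$ and $a+b$ — specifically, the block of $Q(P)$ corresponding to $F$ then $G$ becomes the block corresponding to $G$ then $F$, with the endpoints of the block (both equal to $e_a(P)>0$) unchanged. I would verify that every entry of $Q(W)$ in this block is still positive: writing $c = e_a(P) = e_a(W)$ for the common elevation, the elevation at a position inside the ``$G$'' part of $GF$ equals $c$ plus a partial sum of weights of $G$, and the corresponding elevation inside the ``$G$'' part of $FG$ (inside $P$) equals $c$ plus the same partial sum (since $F$ is balanced, the elevation of $P$ just before $G$ started was also $c$); thus these values coincide with interior elevations of $P$ and are positive. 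Similarly the elevations inside the ``$F$'' part of $GF$ equal $c + e_{l(G)}(\text{stuff})$ — here $G$ is balanced so the level after $G$ is again $c$ — plus partial sums of $F$, matching interior elevations of $P$ and hence positive. Outside the block, $Q(W)$ agrees with $Q(P)$ entrywise. Therefore $e_k(W)>0$ for $1\leq k\leq n-1$, so $W$ is an upper prime of length $n$ by Proposition \ref{reformulation} and Corollary \ref{samesign}.

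I expect the bookkeeping in the last paragraph — carefully matching interior elevations of $W$ with interior elevations of $P$ via the balancedness of $F$ and $G$ — to be the only delicate point; everything else (length preservation, ruling out $FG=P$, the reflection argument for lower primes) is routine. An alternative, perhaps cleaner, route to the same conclusion: show directly that every word equivalent to an upper prime is prime (if $W\sim P$ were non-prime, write $W=W'W''$ with $W',W''$ nonempty balanced; then $P\sim W \sim W''W'$, and one argues that $P$ cannot be equivalent to a product of two nonempty balanced words because, by Lemma \ref{equivmultiset}, $P$ and $W$ share an elevation multiset in which $0$ occurs only twice, while a non-prime balanced word has $0$ occurring at least three times in its elevation sequence) — I would fall back on whichever of these is shorter to write out rigorously, but the elevation-sequence argument above is self-contained given the earlier results.
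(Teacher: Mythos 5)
Your argument is correct, but your main route is noticeably heavier than what the paper does, and in fact your parenthetical ``alternative, perhaps cleaner, route'' at the end is essentially the paper's proof. The paper argues in one stroke at the level of multisets: if $U\sim W$ then $W$ is balanced of the same length $n$ (so $e_0(W)=e_n(W)=0$), and $E(W)=E(U)$ by Lemma \ref{equivmultiset}; since $0$ occurs in $E(U)$ exactly twice and every other value is positive, the two endpoint zeros of $W$ exhaust the zeros, forcing $e_k(W)>0$ for $1\leq k\leq n-1$, i.e.\ $W$ is an upper prime. Your main route instead re-opens the proof of Lemma \ref{equivmultiset} and tracks the full elevation \emph{sequence} through a single swap, checking block by block that interior elevations stay positive, then ruling out the degenerate case $FG=P$ via primality. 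This is sound (the bookkeeping with the balancedness of $F$ and $G$ is exactly right), but it proves more than you need: once you know the multiset is preserved, the counting of zeros does all the work, and you never have to locate where the swapped block sits. Two small tidiness points: you invoke Corollary \ref{upperlowerswap} but then analyze a general swap of type $(F,G)$, so Proposition \ref{equivalence} is the natural thing to cite; and in your fallback argument, after concluding $W$ is prime you should still note that the nonzero values of $E(W)=E(P)$ are all positive to pin down that $W$ is an \emph{upper} prime (Corollary \ref{samesign} gives the dichotomy, and the shared multiset selects the branch).
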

\begin{proof} Let $U$ be an upper prime, let $n=l(U)$, and let $W$ be a word such that $U\sim W$. The goal is to show that $W$ is an upper prime of length $n$. By Proposition \ref{equivalence}, there is a sequence of swaps between $U$ and $W$, so $W$ is a balanced word of length $n$ and thus $e_0(W)=e_n(W)=0$. Zero appears in $E(U)$ exactly twice, and $E(U)=E(W)$ by Lemma \ref{equivmultiset}. Therefore $e_k(W)>0$ for all $k$ with $1\leq k\leq n-1$, which means that $W$ is also an upper prime. This proves the statement about upper primes, and the statement about lower primes can be proven similarly.
\end{proof}

\paragraph{Note:} The following proposition gives an equivalent condition for a subset of $S$ to generate $\J$. This will be a key ingredient in the proof of the main result.

\white
\begin{proposition}
\label{either}
Let $S^\star\subset S$. Then $S^\star$ generates $\J$ if and only if for any upper prime $U$ and lower prime $D$, there exist words $U'\sim U$ and $D'\sim D$ such that either $U'D'-D'U'\in S^\star$ or $D'U'-U'D'\in S^\star$.
\end{proposition}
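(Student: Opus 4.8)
The plan is to establish both implications: the forward direction by understanding the shape of every word equivalent to a product $UD$, and the reverse direction by a strong induction on word length. For the forward direction, assume $S^\star$ generates $\J$ and fix an upper prime $U$ and a lower prime $D$. Since $UD-DU\in S\subseteq\J$, Lemma~\ref{idealswap} (applied with $S^\star$ as the generating set) yields a sequence of swaps $UD=Z_1,Z_2,\dots,Z_k=DU$ in which every swap is of type $(F,G)$ for some $FG-GF\in S^\star$. The crucial sub-claim is: \emph{every word $W$ with $W\sim UD$ has the form $U_1D_1$ or $D_1U_1$ for some upper prime $U_1\sim U$ and lower prime $D_1\sim D$}. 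I would prove this by induction on the number of swaps connecting $W$ to $UD$, using Lemma~\ref{insidelemma} together with its mirror image for products of the form $D_1U_1$ (which follows from the length-reversing, $L\leftrightarrow R$ symmetry of the setup, or by repeating the proof of Lemma~\ref{insidelemma} verbatim). Granting this, $Z_1=UD$ begins with $R$, so it is not of the form $D_1U_1$, while $Z_k=DU$ begins with $L$, so it is not of the form $U_1D_1$; hence some $Z_j$ is of the upper--lower form $U_1D_1$ with $Z_{j+1}$ not of that form. Applying Lemma~\ref{insidelemma} to the swap $Z_j\leftrightarrow Z_{j+1}$, the only possibility compatible with $Z_{j+1}$ not being of upper--lower form is $Z_{j+1}=D_1U_1$, in which case that swap has type $(U_1,D_1)$. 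Since $Z_j=U_1D_1$ and $Z_{j+1}=D_1U_1$ differ in their first and last letters, any swap relating them exhibits $Z_j$ itself as a product of two nonempty balanced words, and the only such factorization of $U_1D_1$ is $U_1\cdot D_1$; hence the element of $S^\star$ realizing this swap is $U_1D_1-D_1U_1$ (up to sign), and taking $U'=U_1$, $D'=D_1$ completes this direction.

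For the reverse direction, assume the stated condition and let $\J^\star$ be the ideal generated by $S^\star$; then $\J^\star\subseteq\J$ because $S^\star\subseteq S$. By Proposition~\ref{sdoubleprime} it suffices to prove $S''\subseteq\J^\star$, and I would do this by proving the stronger statement that $FG-GF\in\J^\star$ for all nonempty balanced words $F,G$, via strong induction on $l(F)+l(G)$. The base case $l(F)=l(G)=2$ is handled using the hypothesis, noting that by Lemma~\ref{prime-equiv} the equivalence class of $RL$ is just $\{RL\}$ and that of $LR$ is just $\{LR\}$, so the hypothesis forces $(RL)(LR)-(LR)(RL)\in\J^\star$. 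For the inductive step: if $F$ is not prime, factor $F=F_1F_2$ with $F_1,F_2$ nonempty balanced and use $FG-GF=F_1(F_2G-GF_2)+(F_1G-GF_1)F_2$ together with the inductive hypothesis, exactly as in the proof of Lemma~\ref{sprime} (symmetrically if $G$ is not prime). If $F$ and $G$ are primes of opposite type, say $F$ upper and $G$ lower, the hypothesis supplies $U'\sim F$ and $D'\sim G$ with $U'D'-D'U'\in\J^\star$; one then shows $F-U'\in\J^\star$ and $G-D'\in\J^\star$ by transporting along sequences of swaps connecting $F$ to $U'$ and $G$ to $D'$ (all intermediate words being primes of the same length by Lemma~\ref{prime-equiv}), where each individual swap is confined to the interior of a prime of length $\le\max(l(F),l(G))$ and therefore rearranges balanced words of total length $<l(F)+l(G)$, so the inductive hypothesis applies step by step; then $FG-GF\equiv U'D'-D'U'\equiv0\pmod{\J^\star}$, and the remaining opposite-type case follows by negation. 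If $F$ and $G$ are both upper primes, Lemma~\ref{upperandlr} writes $FG-GF$ as a combination of elements $U(LR)-(LR)U$ with $U$ an upper prime, and inspecting that proof shows every such $U$ satisfies $l(U)\le\max(l(F),l(G))-2$, so that $l(U)+l(LR)<l(F)+l(G)$ and the inductive hypothesis applies to each summand; the case of two lower primes is symmetric via Lemma~\ref{lowerandrl}.

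The forward direction is largely bookkeeping once the sub-claim on the shape of words equivalent to $UD$ is available. The genuine difficulty is in the reverse direction, specifically the transport step for two opposite-type primes: the relations $F-U'\in\J^\star$ for equivalent primes cannot be derived directly, since the swaps linking $F$ to $U'$ need not be among the ``$S^\star$-swaps'', but they can be derived inductively because each such swap lies in the interior of a prime and hence only rearranges strictly shorter balanced words --- which is exactly why the induction has to be organized by total length rather than, for instance, by number of prime factors. A secondary but non-trivial point is to keep track, inside the proofs of Lemmas~\ref{upperandlr} and~\ref{lowerandrl}, of the lengths of the primes occurring in the resulting ideal combinations, so that the inductive hypothesis stays applicable in those two cases.
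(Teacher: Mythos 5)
Your proposal is correct, and the forward direction is essentially the paper's argument: both iterate Lemma \ref{insidelemma} along a sequence of $S^\star$-swaps from $UD$ to $DU$ to locate a swap of type $(U_1,D_1)$ with $U_1\sim U$, $D_1\sim D$ (the paper phrases it as a contradiction --- the sequence can never leave the upper--lower form --- while you extract the witnessing swap directly; your appeal to a ``mirror'' of Lemma \ref{insidelemma} is actually unnecessary, since you only track the sequence up to the first departure from the form $U_1D_1$). The reverse direction is where you genuinely diverge. The paper inducts on $l(UD)$ over upper--lower pairs only and leans on Corollary \ref{upperlowerswap}: the swaps transporting $U$ to $U'$ can be taken to be of type $(U'',D'')$ with $l(U''D'')\le l(U)\le l(UD)-2$, so each is handled by the induction hypothesis with no further case analysis. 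You instead prove the stronger statement that $FG-GF\in\J^\star$ for \emph{all} nonempty balanced $F,G$ by strong induction on $l(F)+l(G)$, which forces you to re-absorb the content of Lemma \ref{sprime} and Proposition \ref{sdoubleprime} into the induction (the non-prime case, the two same-type-prime cases via Lemmas \ref{upperandlr} and \ref{lowerandrl} with an extra length bookkeeping claim about their proofs, and the opposite-type transport step). Your version buys independence from Corollary \ref{upperlowerswap} --- arbitrary swaps inside a prime suffice, because a swapped subword of a prime is proper and hence shorter --- at the cost of a heavier case analysis and of having to verify, inside the proof of Lemma \ref{upperandlr}, that every upper prime $U$ occurring there satisfies $l(U)\le\max(l(F),l(G))-2$; that verification does go through, but it is a claim about a proof rather than about a stated result, so if you keep this route you should isolate it as an explicit strengthening of Lemma \ref{upperandlr}. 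The paper's packaging via Corollary \ref{upperlowerswap} avoids exactly this.
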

\begin{proof}
As $S^\star\subset S$, there is a set $I$ and balanced words $F_i,G_i$ so that $S^\star=\{F_iG_i-G_iF_i\}_{i\in I}$. For the purpose of this proof, a swap of type $(F_i,G_i)$ for some $i$ is called an $S^\star$-swap.

Assume first that $S^\star$ generates $\J$. If the second condition in the statement does not hold, then there exist an upper prime $U$ and a lower prime $D$ so that for any $U'\sim U$ and $D'\sim D$, the elements $U'D'-D'U'$ and $D'U'-U'D'$ are not in $S^\star$. In other words, swaps of type $(U',D')$ are not $S^\star$-swaps. Because $UD-DU\in \J$, there is a sequence of $S^\star$-swaps between $UD$ and $DU$, by Lemma \ref{idealswap}.

Let $\{U_j\}_{j=1}^m$ and $\{D_k\}_{k=1}^n$ be the equivalence classes of $U$ and $D$, respectively. Let $X$ be a word so that there is a sequence of swaps between $UD$ and $X$. By repeated application of Lemma \ref{insidelemma}, $X$ can be $U_jD_k$ for some $j$ and $k$, but because swaps of type $(U_j,D_k)$ are not $S^\star$-swaps, $X$ cannot be $D_kU_j$. In particular, $X$ cannot be $DU$, which is a contradiction.

Now assume that for any upper prime $U$ and lower prime $D$, there exist words $U'\sim U$ and $D'\sim D$ such that either $U'D'-D'U'\in S^\star$ or $D'U'-U'D'\in S^\star$, or in other words, the swap of the type $(U',D')$ is an $S^\star$-swap. By Proposition \ref{sdoubleprime}, $S''$ generates $\J$, so for showing that $S^\star$ generates $\J$ it is enough to show that every element of $S''$ is in the ideal generated by $S^\star$. Using Lemma \ref{idealswap}, this translates to showing that for any upper prime $U$ and lower prime $D$, there is a sequence of $S^\star$-swaps between $UD$ and $DU$.

The proof is by induction on $l(UD)$. The smallest possible case is $l(UD)=4$; this happens only for $U=RL$ and $D=LR$. Both of these words are the only elements of their equivalence classes, so the assumption implies that the swap of type $(U,D)$ is an $S^\star$-swap and by itself forms the desired sequence of $S^\star$-swaps between $UD$ and $DU$.

Assume now that $l(UD)>4$. By the assumption, there exist words $U'\sim U$ and $D'\sim D$ such that the swap of type $(U',D')$ is an $S^\star$-swap. By Corollary \ref{upperlowerswap}, there is a sequence of swaps between $U$ and $U'$ where each swap is of type $(U'', D'')$ for some upper prime $U''$ and lower prime $D''$. Now $l(U''D'')\leq l(U)\leq l(UD)-2$, so by induction, there is a sequence of $S^\star$-swaps between $U''D''$ and $D''U''$. Combining the above observations gives a sequence of $S^\star$-swaps between $U$ and $U'$. Similarly, there is a sequence of $S^\star$-swaps between $D$ and $D'$.

Now there is a sequence of $S^\star$-swaps between $UD$ and $DU$ as follows, with each arrow representing a sequence of $S^\star$-swaps: $UD\leftrightarrow U'D' \leftrightarrow D'U' \leftrightarrow DU$. This concludes the proof.
\end{proof}

\paragraph{Representatives:} Let $\Upsilon$ denote the set of equivalence classes of upper primes, and $\Lambda$ the set of equivalence classes of lower primes. In order to state the main result, a representative will be chosen for each equivalence class. For each $\upsilon \in \Upsilon$, denote the chosen representative by $U_\upsilon$, and for each $\lambda\in \Lambda$, denote the chosen representative by $D_\lambda$.

\paragraph{Minimal generating set:} Define the set
$$S'''=\big\{U_\upsilon D_\lambda-D_\lambda U_\upsilon\colon\, \upsilon \in \Upsilon, \lambda\in \Lambda\big\}.$$
Note that $S'''$ depends on the chosen representatives.

\white
\begin{theorem}
\label{minimal}
$S'''$ is a minimal subset of $S$ that generates $\J$.
\end{theorem}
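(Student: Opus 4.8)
The plan is to deduce both halves of the statement --- that $S'''$ generates $\J$ and that it is minimal --- from Proposition~\ref{either}, which characterizes exactly which subsets of $S$ generate $\J$.

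For generation, I would apply Proposition~\ref{either} directly. Given an upper prime $U$ and a lower prime $D$, take $\upsilon\in\Upsilon$ to be the equivalence class of $U$ and $\lambda\in\Lambda$ to be the equivalence class of $D$; then $U_\upsilon\sim U$, $D_\lambda\sim D$, and $U_\upsilon D_\lambda-D_\lambda U_\upsilon\in S'''$ by construction, which is exactly the pair required by the proposition. (The inclusion $S'''\subseteq S$ is immediate, since each $U_\upsilon$ and each $D_\lambda$ is a nonempty balanced word.)

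For minimality, it is enough to show that for each $g\in S'''$ the set $S^\star:=S'''\setminus\{g\}$ fails to generate $\J$, since any proper subset generating $\J$ would be contained in such an $S^\star$. Writing $g=U_{\upsilon_0}D_{\lambda_0}-D_{\lambda_0}U_{\upsilon_0}$, I would again invoke Proposition~\ref{either}: it suffices to exhibit a single upper prime and lower prime for which the condition of that proposition fails, and the natural candidates are $U_{\upsilon_0}$ and $D_{\lambda_0}$. So I would take arbitrary $U'\sim U_{\upsilon_0}$ and $D'\sim D_{\lambda_0}$ --- these are an upper prime and a lower prime by Lemma~\ref{prime-equiv} --- and argue that the only element of $S'''$ of the form $U'D'-D'U'$ or $D'U'-U'D'$ is $g$ itself. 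For the first form, $U'D'-D'U'=U_{\upsilon'}D_{\lambda'}-D_{\lambda'}U_{\upsilon'}$ forces $U'D'=U_{\upsilon'}D_{\lambda'}$ by linear independence of distinct words; since both sides are products of exactly two primes, uniqueness of prime factorization (Lemma~\ref{primelemma}) gives $U'=U_{\upsilon'}$ and $D'=D_{\lambda'}$, hence $\upsilon'=\upsilon_0$ and $\lambda'=\lambda_0$, and the element is $g$. For the second form, $D'U'=U_{\upsilon'}D_{\lambda'}$ would force $D'=U_{\upsilon'}$ by unique factorization, which is impossible since a lower prime begins with $L$ and an upper prime with $R$. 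Since $g\notin S^\star$, the condition of Proposition~\ref{either} fails for $(U_{\upsilon_0},D_{\lambda_0})$, so $S^\star$ does not generate $\J$.

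The generation half is routine, and I expect the minimality half to be only bookkeeping. The step requiring genuine care is pinning down that $U'D'-D'U'\in S'''$ forces $(\upsilon',\lambda')=(\upsilon_0,\lambda_0)$: this rests on linear independence of words, on uniqueness of prime factorization (used to recover $U'$ and $D'$ from the product $U'D'$), and on the fact that an upper prime and a lower prime can never be equal --- the last point also ensuring that every element of $S'''$ is a genuine nonzero difference of two distinct basis words, so that comparing coefficients is legitimate.
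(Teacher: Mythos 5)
Your proposal is correct and follows essentially the same route as the paper: both halves are deduced from Proposition~\ref{either}, with generation immediate from the construction of $S'''$ and minimality from the failure of the proposition's condition for the pair $(U_{\upsilon_0},D_{\lambda_0})$ once $g$ is removed. The paper states the minimality half in one sentence, whereas you supply the implicit justification (linear independence of words, unique prime factorization, and upper versus lower primes being distinguishable by their first letter) that pins down $g$ as the only element of $S'''$ that could witness the condition for that pair --- a worthwhile elaboration, but not a different argument.
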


\begin{proof}
Firstly, clearly $S'''\subset S$. The way that $S'''$ is defined guarantees that the condition in Proposition \ref{either} is satisfied; by this proposition, $S'''$ generates $\J$. By the same proposition, no relation $U_\upsilon D_\lambda-D_\lambda U_\upsilon$ can be removed from $S'''$, as then $S'''$ wouldn't generate $\J$ anymore. This concludes the proof.
\end{proof}

\pagebreak

\section{Choosing representatives}

As was shown in Theorem \ref{minimal}, the set $S'''$ is a minimal set of generators for $\J$. Note that $S'''$ is not unique, because it depends on the choice of representatives for the equivalence classes of prime words; therefore the result actually gives a family of minimal sets of generators. This section focuses on one possible choice of representatives: the minimal word with respect to alphabetical order. The concept of a reduced word is introduced, and it will be shown that every equivalence class of balanced words contains a unique reduced word which coincides with the minimal word. The reduced word can be easily found using an algorithm, and this gives a convenient way of finding the minimal word of the equivalence class of a given balanced word, without having to list all the words in the equivalence class.

\paragraph{Definition (alphabetical order):} The set of words is linearly ordered by alphabetical order. Let $X$ and $Y$ be any words. The notation $X<Y$ means that $X$ comes before $Y$ in alphabetical order; for example, $LLRR<RRLL$. Some other examples are $LL<LLR$ and $LLL<LR$.

\paragraph{Minimal representatives:} By Corollary \ref{finite}, each equivalence class of words is finite. Therefore, each equivalence class has a minimal word with respect to alphabetical order. Moreover, because alphabetical order is a linear order, this minimal word is unique and it will thus be referred to as \textit{the minimal word} of the equivalence class. In this section, the representatives $U_\upsilon$ and $D_\lambda$ needed to define $S'''$ are chosen to be the minimal words of their equivalence classes. (See definition of $S'''$ right before Theorem \ref{minimal}.) 

\paragraph{Definition (reduced words):} A word $W$ is called \textit{reduced}, if it does not contain a subword of the type $UD$ where $U$ is an upper prime and $D$ is a lower prime.

\white
\begin{lemma}
\label{reductionlemma}
Let $U$ be an upper prime and $D$ a lower prime. Let $W_1$ and $W_2$ be any words. Then $W_1DUW_2\sim W_1UDW_2$ and $W_1DUW_2<W_1UDW_2$.
\end{lemma}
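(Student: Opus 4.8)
The plan is to prove the two assertions separately. The equivalence $W_1DUW_2 \sim W_1UDW_2$ is the easy half: since $U$ is an upper prime and $D$ is a lower prime, both are nonempty balanced words, so $UD - DU \in S \subseteq \J$ by definition of $S$. Hence $W_1(UD - DU)W_2 \in \J$, which is exactly the statement $W_1UDW_2 - W_1DUW_2 \in \J$, i.e. $W_1DUW_2 \sim W_1UDW_2$. (Equivalently, the two words are related by a single swap of type $(U,D)$, so Proposition \ref{equivalence} applies.)

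For the inequality $W_1DUW_2 < W_1UDW_2$, the key point is to compare $DU$ and $UD$ in alphabetical order: it suffices to show $DU < UD$, since then prepending the common prefix $W_1$ and appending the common suffix $W_2$ preserves the strict inequality (alphabetical order is determined by the first position where two words differ, and $DU$, $UD$ have the same length $l(U)+l(D)$, so $W_1DUW_2$ and $W_1UDW_2$ first differ within the $DU$ versus $UD$ block). To see $DU < UD$: by the Note following Corollary \ref{samesign}, an upper prime starts with the letter $R$; by the analogous fact (or directly from Corollary \ref{samesign}(ii), since $e_1(D) < 0$ forces the first letter to be $L$), a lower prime starts with the letter $L$. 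Therefore $DU$ begins with $L$ and $UD$ begins with $R$, and since $L < R$ alphabetically, the words differ already in the first position and $DU < UD$.

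I do not expect a genuine obstacle here; the only thing requiring a moment's care is the bookkeeping that strict alphabetical inequality is preserved under concatenation on both sides when the two middle blocks have equal length — this is immediate from the definition of alphabetical order once one observes that $l(DU) = l(UD)$. The main conceptual input, that upper primes start with $R$ and lower primes start with $L$, is already recorded in the text (the Note after Corollary \ref{samesign} and Corollary \ref{samesign} itself), so the proof is short.
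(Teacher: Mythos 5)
Your proposal is correct and matches the paper's argument: the paper likewise gets equivalence from the single swap of type $(U,D)$ via Proposition \ref{equivalence}, and gets the inequality from the fact that $U$ starts with $R$ while $D$ starts with $L$, so $DU<UD$ and hence $W_1DUW_2<W_1UDW_2$. Your extra remark about equal block lengths making the concatenation step safe is a fair point of care that the paper leaves implicit.
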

\begin{samepage}
\begin{proof}
The words are related by a swap of type $(U,D)$, which implies equivalence by Proposition \ref{equivalence}. The word $U$ starts with the letter $R$ and $D$ with the letter $L$, so $DU<UD$ and therefore also $W_1DUW_2<W_1UDW_2$.
\end{proof}
\end{samepage}

\white
\begin{samepage}
\begin{lemma}
\label{minimal-red}
The minimal word of an equivalence class is reduced.
\end{lemma}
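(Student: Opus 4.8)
The plan is to prove the contrapositive: if a word $W$ is not reduced, then it is not the minimal word of its equivalence class. Suppose $W$ is not reduced. By the definition of reduced, $W$ contains a subword of the form $UD$, where $U$ is an upper prime and $D$ is a lower prime. Thus we may write $W = W_1 U D W_2$ for some words $W_1, W_2$.

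Now apply Lemma \ref{reductionlemma} with this choice of $U$, $D$, $W_1$, $W_2$. The lemma gives $W_1 D U W_2 \sim W_1 U D W_2 = W$, so $W_1 D U W_2$ lies in the same equivalence class as $W$; and it gives $W_1 D U W_2 < W_1 U D W_2 = W$, so $W$ is not the smallest word in its equivalence class with respect to alphabetical order. Hence $W$ is not the minimal word of its equivalence class.

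Taking the contrapositive: the minimal word of any equivalence class must be reduced. This completes the proof.

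The argument is essentially immediate once Lemma \ref{reductionlemma} is in hand, so there is no real obstacle here; the only thing to be careful about is the logical direction (arguing by contrapositive rather than directly) and making sure the decomposition $W = W_1 U D W_2$ coming from the failure of reducedness is exactly the shape required to feed into Lemma \ref{reductionlemma}.
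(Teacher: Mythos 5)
Your proof is correct and follows essentially the same route as the paper's: both rely on the decomposition $W=W_1UDW_2$ coming from the failure of reducedness and then invoke Lemma \ref{reductionlemma} to produce a smaller equivalent word. The only difference is cosmetic — you argue by contrapositive while the paper argues by contradiction.
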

\begin{proof}
If the minimal word is not reduced, then it has a subword $UD$ where $U$ is an upper prime and $D$ is a lower prime. Then a swap of type $(U,D)$ produces a smaller word in the same equivalence class, by Lemma \ref{reductionlemma}. This is a contradiction, so the minimal word is reduced.
\end{proof}
\end{samepage}

\paragraph{Unique reduced word:} The next results are preparation for Proposition \ref{unique-red}, which states that every equivalence class of balanced words contains a unique reduced word.

\pagebreak

\begin{lemma}
\label{technical}
Let $W$ be a balanced word, and assume that $e_i(W)\geq e_j(W)$ for some $i<j$. Then there exists an integer $e\in [e_j(W), e_i(W)]$, and integers $i'\in [0,i]$ and $j'\in [j,l(W)]$ with $e_{i'}(W)=e_{j'}(W)=e$. (The bracket notation refers to a closed interval.)
\end{lemma}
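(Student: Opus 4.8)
The plan is to reduce everything to a discrete intermediate value argument for the elevation sequence, exploiting that consecutive elevations differ by exactly $1$ and that $e_0(W)=e_{l(W)}(W)=0$. Write $n=l(W)$ and abbreviate $e_k=e_k(W)$. The first thing I would record (in the style of the proof of Corollary \ref{samesign}) is the following observation: if $a\le b$ are two terms of the elevation sequence occurring at indices $s\le t$, then for every integer $v$ with $a\le v\le b$ there is an index $u$ with $s\le u\le t$ and $e_u=v$. This is proved by taking $u=s$ when $v=a$, and otherwise taking the largest $u\le t$ with $e_u<v$ (which exists since $e_s=a<v$ and satisfies $u<t$ since $e_t=b\ge v$); the step of size $1$ then forces $e_{u+1}=v$.

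With this in hand, I would split into three cases according to where $0$ lies relative to the interval $[e_j,e_i]$ (note $e_i\ge e_j$ by hypothesis, so this interval makes sense). If $0\in[e_j,e_i]$, take $e=0$, $i'=0$ and $j'=n$, using $e_0=e_n=0$. If $e_j>0$, so that $0<e_j\le e_i$, apply the observation to the initial segment of the elevation sequence (indices $0,\dots,i$), whose endpoint values are $e_0=0$ and $e_i$: it attains every integer in $[0,e_i]$, in particular $e_j$, at some index $i'\in[0,i]$; then take $e=e_j$ and $j'=j$. Symmetrically, if $e_i<0$, so that $e_j\le e_i<0$, apply the observation to the final segment (indices $j,\dots,n$), whose endpoint values are $e_j$ and $e_n=0$: it attains $e_i$ at some index $j'\in[j,n]$; then take $e=e_i$ and $i'=i$. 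In every case $e\in[e_j,e_i]$ and the required indices $i'\in[0,i]$, $j'\in[j,n]$ have been produced.

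I do not anticipate a genuine obstacle: the statement is essentially a repackaging of the discrete intermediate value property, and the only points needing care are the boundary situations ($i=0$, $j=n$, or $e_i=0$, or $e_j=0$), all of which either fall under the first case or cause no trouble in the other two. The one thing worth verifying explicitly is that the three cases are exhaustive given $e_j\le e_i$: if $0\notin[e_j,e_i]$, then the interval lies entirely above $0$ (forcing $e_j>0$) or entirely below $0$ (forcing $e_i<0$), so one of the three cases always applies.
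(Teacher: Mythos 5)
Your proof is correct and follows essentially the same route as the paper's: a three-way case split on the position of $0$ relative to $[e_j(W),e_i(W)]$, using $e_0(W)=e_{l(W)}(W)=0$ and the discrete intermediate value property of a sequence with unit steps. The only differences are cosmetic — you assign the boundary cases $e_j(W)=0$ or $e_i(W)=0$ to the ``$0$ in the interval'' case rather than to the outer cases, and you spell out the intermediate value step that the paper leaves implicit.
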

\begin{proof}
There are three cases:
\begin{enumerate}[label =(\roman*)]
    \item $e_i(W)\geq e_j(W)\geq 0$: In this case, choose $e=e_j(W)$ and $j'=j$. As $e_0(W)=0\leq e\leq e_i(W)$, there exists $i'\in [0,i]$ so that $e_{i'}(W)=e=e_{j'}(W)$.
    \item $e_i(W)>0>e_j(W)$: In this case, choose $e=0$, $i'=0$, and $j'=l(W)$.
    \item $0\geq e_i(W)\geq e_j(W)$: Similar to (i), with $e=e_i(W)$ and $i'=i$.
\end{enumerate}
\end{proof}

\paragraph{Example:} As an example to Lemma \ref{technical}, consider the balanced word $W=RRRLLRLL$, and let $i=3$ and $j=5$. For this word, $e_3(W)=3\geq 1=e_5(W)$, so the lemma implies that some elevation between 1 and 3 is obtained at both ends. For this example, this elevation is either $e=2$ (with $i'=2$ and $j'=6$), or $e=1$ (with $i'=1$ and $j'\in\{5,7\}$). See illustration in Figure \ref{figuretechnical}.

\begin{figure}[H]
\begin{center}
\scalebox{0.5}{
\begin{tikzpicture}
    \newcommand\s{0.4}
    \node[circle,scale=0.4] at (0,0) (0) {};
    \node[circle,scale=0.4] at (\s,1) (1) {};
    \node[circle,scale=0.4] at (2*\s,2) (2) {};
    \node[circle, fill = black] at (3*\s,3) (3) {};
    \node[circle,scale=0.4] at (4*\s,2) (4) {};
    \node[circle, fill = black] at (5*\s,1) (5) {};
    \node[circle,scale=0.4] at (6*\s,2) (6) {};
    \node[circle,scale=0.4] at (7*\s,1) (7) {};
    \node[circle,scale=0.4] at (8*\s,0) (8) {};
    \draw (0)--(1)--(2)--(3)--(4)--(5)--(6)--(7)--(8);
    \draw[dashed] (-1,1)--(8*\s+1,1);
    \draw[dashed] (-1,3)--(8*\s+1,3);
    \node[] at (-3,1) {\LARGE $e_5(W)=1$};
    \node[] at (-3,3) {\LARGE $e_3(W)=3$};
\end{tikzpicture}
}
\end{center}
\caption{An example of Lemma \ref{technical}. The black circles indicate the locations $i=3$ and $j=5$, and the horizontal lines the elevations $e_3(W)=3$ and $e_5(W)=1$. Graphically, the statement of the lemma says that it is possible to draw a horizontal line between the two given horizontal lines which intersects the graph both left from $i$ and right from $j$; here the line can be drawn at elevation 1 or 2.}
\label{figuretechnical}
\end{figure}
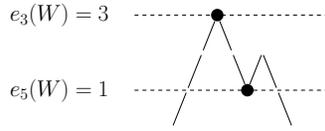

\white
\begin{proposition}
\label{reduced-equiv}
Let $W$ be a balanced word. The following are equivalent:
\begin{enumerate}[label=(\roman*)]
    \item $W$ is reduced;
    \item $W$ does not contain a subword of the type $RL^nR$ where $n\geq 2$;
    \item $W=L^a(RL)^{k_1}R(RL)^{k_2}R\cdots (RL)^{k_m}RL^b$, where $a,b,m \geq 0$, with $a+b=m$, and $k_i\geq 0$ for $1\leq i\leq m$.
\end{enumerate}
\end{proposition}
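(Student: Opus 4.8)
The plan is to prove the two equivalences (ii) $\Leftrightarrow$ (iii) and (i) $\Leftrightarrow$ (ii). The first is a purely combinatorial statement about where runs of $L$'s can sit in $W$, while the second links this to the prime structure through elevations; the only real work is in one half of the second equivalence.

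\emph{The equivalence (ii) $\Leftrightarrow$ (iii).} For (iii) $\Rightarrow$ (ii): in $L^a(RL)^{k_1}R\cdots(RL)^{k_m}RL^b$ every $L$ of the middle part $(RL)^{k_1}R\cdots(RL)^{k_m}R$ is immediately followed by an $R$, while $L^a$ (if nonempty) is followed by an $R$ and $L^b$ (if nonempty) is preceded by one; hence every maximal block of two or more consecutive $L$'s lies inside $L^a$ or inside $L^b$ and so is not flanked by $R$'s on both sides, which shows $W$ has no subword $RL^nR$ with $n\geq2$. For (ii) $\Rightarrow$ (iii): if $W$ is empty the statement is trivial ($a=b=m=0$), and otherwise $W$ contains an $R$, so stripping the maximal leading and trailing runs of $L$'s gives $W=L^aXL^b$ with $X$ nonempty and beginning and ending with $R$. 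A block of two or more consecutive $L$'s inside $X$ would be flanked by $R$'s and give a forbidden subword, so $X$ contains no $LL$; consequently every $L$ of $X$ is immediately followed by an $R$, so $X$ decomposes into tokens each equal to $R$ or $LR$ (glue each $L$ to the following $R$), and grouping these tokens by cutting just before each solitary $R$ — the first token being a solitary $R$, as $X$ starts with $R$ — exhibits $X=R(LR)^{k_1}\cdots R(LR)^{k_m}=(RL)^{k_1}R\cdots(RL)^{k_m}R$. Comparing the numbers of $R$'s and $L$'s in $W$ then forces $a+b=m$.

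\emph{The easy half (ii) $\Rightarrow$ (i).} I prove the contrapositive. If $W$ is not reduced it contains a subword $UD$ with $U$ an upper prime and $D$ a lower prime. An upper prime ends in $L$, and since it starts with $R$ its trailing run of $L$'s is a proper suffix, so by maximality of that run $U=U'RL^s$ for some word $U'$ and some $s\geq1$; symmetrically a lower prime starts with $L$, so $D=L^tRD'$ with $t\geq1$. Then $UD=U'RL^{s+t}RD'$ contains $RL^{s+t}R$ with $s+t\geq2$, and hence so does $W$.

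\emph{The hard half (i) $\Rightarrow$ (ii), and the main obstacle.} Again I argue the contrapositive: assuming $W$ contains $RL^nR$ with $n\geq2$, I must produce a subword $UD$. Fix such an occurrence at positions $p+1,\dots,p+n+2$ and put $P=e_{p+1}(W)$ and $V=P-n$; then along positions $p+1,\dots,p+n+1$ the elevation strictly descends through $P,P-1,\dots,V$, while $e_p(W)=P-1$, $e_{p+n+2}(W)=V+1$, and $e_0(W)=e_{l(W)}(W)=0$. Since the values attained by a walk with $\pm1$ steps form an interval of integers containing its endpoints, the initial stretch $e_0,\dots,e_p$ attains every value between $0$ and $P-1$, and the final stretch $e_{p+n+2},\dots,e_{l(W)}$ attains every value between $V+1$ and $0$. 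As $n\geq2$ the interval $[V+1,P-1]$ is nonempty, and a short sign case-check — take $\ell=0$ when $0\in[V+1,P-1]$, $\ell=P-1$ when $P-1<0$, and $\ell=V+1$ when $V+1>0$, these cases being exhaustive — produces an integer $\ell\in[V+1,P-1]$ visited by both stretches. Let $j$ be the position on the descending run with $e_j(W)=\ell$, let $i\leq p$ be the largest position with $e_i(W)=\ell$, and let $j'\geq p+n+2$ be the smallest position with $e_{j'}(W)=\ell$; one checks that elevations are strictly larger than $\ell$ at all positions strictly between $i$ and $j$ and strictly smaller than $\ell$ at all positions strictly between $j$ and $j'$. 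By Proposition \ref{reformulation} and the definition of upper and lower primes (Corollary \ref{samesign}), the subword on positions $i+1,\dots,j$ is an upper prime $U$ and the subword on positions $j+1,\dots,j'$ is a lower prime $D$, so $UD$ is a subword of $W$ and $W$ is not reduced. This last implication is the crux of the proposition; within it the two points needing genuine care are the production of a single level $\ell$ in $[V+1,P-1]$ visited on both sides of the forbidden subword, and the verification — via the relative elevation sequence of each stretch — that the two subwords flanking $j$ are really an upper prime and a lower prime rather than arbitrary balanced words. Everything else is bookkeeping.
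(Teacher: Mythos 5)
Your proof is correct and follows essentially the same route as the paper's: the combinatorial rewriting for (ii)~$\Leftrightarrow$~(iii), the observation that a subword $UD$ forces a block $RL^{s+t}R$ with $s+t\geq 2$, and, for the hard direction, the extraction of an upper prime followed by a lower prime from an occurrence of $RL^nR$ by finding a common elevation level reached on both sides. The only differences are organizational: you prove the equivalences pairwise rather than cyclically, you reprove Lemma~\ref{technical} inline via your sign case-check instead of citing it, and you spell out the verification (via Proposition~\ref{reformulation} and Corollary~\ref{samesign}) that the two pieces flanking the descending run really are an upper and a lower prime, a point the paper's proof asserts without detail.
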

\begin{proof}
(i) $\implies$ (ii): Assume, on the contrary, that $W$ contains a subword $RL^nR$ for some $n\geq 2$; this means that $W=W_1RL^nRW_2$ for some words $W_1,W_2$. Let $i=l(W_1)$, and $j=i+n+2$ (indices $i$ and $j$ are the starting and ending locations of the subword $RL^nR$). Now $e_i(W)=e_j(W)+n-2\geq e_j(W)$, because $n\geq 2$. By Lemma \ref{technical}, there exists an integer $e$ with $e_j(W)\leq e\leq e_i(W)$, together with $i'\leq i$ and $j'\geq j$ so that $e_{i'}(W)=e$ and $e_{j'}(W)=e$. If there are multiple choices for $i'$ and $j'$, then the largest possible $i'$ and smallest possible $j'$ are chosen. But now the subword of $W$ starting at index $i'$ and ending at $j'$ is a product $UD$ where $U$ is an upper prime and $D$ is a lower prime, which contradicts (i). (See example picture in Figure \ref{figuresubword}.)

(ii) $\implies$ (iii): Write $W=L^aW'L^b$ where $a,b\geq 0$ and there are no letters $L$ in the beginning or end of $W$. If $W'$ is empty, then $W$ is empty as well, because if there are no letters $R$ then there are no letters $L$ either, as $W$ is balanced. In this case $W$ is of the form (iii) with $a=b=m=0$. Assume $W'$ is not empty; then $W'$ starts and ends with a letter $R$. (The special case $W'=R$ can be dealt with in the same way as the general case with the two $R$'s different.)

In the word $W'$, any $L$ has an $R$ left to it, because two or more successive letters $L$ would produce a subword of type $RL^nR$ with $n\geq 2$. This means that $W'$ is a product of factors each of which is either $RL$ or $R$. By inserting the empty subword in the form $(RL)^0$ between successive letters $R$ (and on the left side of the leftmost $R$ if needed), it is possible to write $W'=(RL)^{k_1}R \cdots (RL)^{k_m}R$, for some $m\geq 1$, with $k_i\geq 0$ for $0\leq i\leq m$. It has been shown that $W=L^a(RL)^{k_1}R(RL)^{k_2}R\cdots (RL)^{k_m}RL^b$, with $a,b,m \geq 0$, and $k_i\geq 0$ for $1\leq i\leq m$. Finally, each factor $(RL)^{k_i}R$ has one more $R$ than $L$, and therefore $a+b=m$, because $W$ is balanced.

(iii) $\implies$ (i): If $W$ is not reduced, it contains a subword $UD$ where $U$ is an upper prime and $D$ is a lower prime. By definition, the words $U$ and $D$ both have length at least 2, so $U$ ends with an $L$ and $D$ starts with an $L$, and both $U$ and $L$ contain at least one $R$. This is a contradiction, as words of form (iii) do not contain adjacent letters $L$ in the middle.
\end{proof}

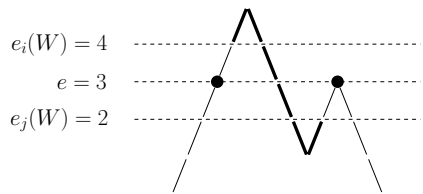
\begin{figure}[H]
\begin{center}
\scalebox{0.5}{
\begin{tikzpicture}
    \newcommand\s{0.4}
    \node[circle,scale=0.4] at (0,0) (0) {};
    \node[circle,scale=0.4] at (1*\s,1) (1) {};
    \node[circle,scale=0.4] at (2*\s,2) (2) {};
    \node[circle,fill = black] at (3*\s,3) (3) {};
    \node[circle,scale=0.4] at (4*\s,4) (4) {};
    \node[circle,scale=0.4] at (5*\s,5) (5) {};
    \node[circle,scale=0.4] at (6*\s,4) (6) {};
    \node[circle,scale=0.4] at (7*\s,3) (7) {};
    \node[circle,scale=0.4] at (8*\s,2) (8) {};
    \node[circle,scale=0.4] at (9*\s,1) (9) {};
    \node[circle,scale=0.4] at (10*\s,2) (10) {};
    \node[circle,fill = black] at (11*\s,3) (11) {};
    \node[circle,scale=0.4] at (12*\s,2) (12) {};
    \node[circle,scale=0.4] at (13*\s,1) (13) {};
    \node[circle,scale=0.4] at (14*\s,0) (14) {};
    \draw (0)--(1)--(2)--(3)--(4)--(5)--(6)--(7)--(8)--(9)--(10)--(11)--(12)--(13)--(14);
    \draw[line width = 2.5] (4)--(5)--(6)--(7)--(8)--(9)--(10); 
    \draw[dashed] (-1,3)--(14*\s+1,3);
    \draw[dashed] (-1,4)--(14*\s+1,4);
    \draw[dashed] (-1,2)--(14*\s+1,2);
    \node[] at (-3,2) {\LARGE $e_j(W)=2$};
    \node[] at (-2.4,3) {\LARGE $e=3$};
    \node[] at (-3,4) {\LARGE $e_i(W)=4$};
\end{tikzpicture}
}
\end{center}
\caption{Illustration of Proposition \ref{reduced-equiv} (i) $\implies$ (ii), with the example word $RRRRRLLLLRRLLL$ which has a subword $RL^4R$. In this example $i=4$ and $j=10$, and the subword $RL^4R$ is drawn in bold. The black circles indicate the locations $i'=3$ and $j'=11$, which are the starting and ending points for $UD=(RRLL)(LLRR)$. (Note that the choices $e=2$, $i'=2$, and $j'=10$ would be valid as well.)}
\label{figuresubword}
\end{figure}

\paragraph{Example:} The table below contains four reduced words, together with the corresponding parameters $a$, $b$, $m$, and $k_1,\dots, k_m$ from of Proposition \ref{reduced-equiv} (iii).

\begin{center}    
\begin{tabular}{|*{5}{c|}}
    \hline
    Reduced word & $a$ & $b$ & $m$ & $(k_1,\dots, k_m)$ \\
    \hline
    $LRRL$ & 1 & 1 & 2 & $(0,0)$ \\
    \hline
    $RLRRLL$ & 0 & 2 & 2 & $(1,0)$ \\
    \hline
    $LLRLRRRLRRLL$ & 2 & 2 & 4 & $(1,0,1,0)$ \\
    \hline
    $LRRRLRLRLL$ & 1 & 2 & 3 & $(0,0,2)$ \\
    \hline
\end{tabular}
\end{center}

\pagebreak

\begin{lemma}
\label{multisetvalues}
Let $W$ be a balanced word as in Proposition \ref{reduced-equiv} (iii). Then the elevation multiset of $W$ \nolinebreak is $$E(W)=\{0,-1,\dots, -a\}\cup \{0,1,\dots, b\}\cup \{(i-a)^{\mu_i}\}_{i=0}^m,$$
where the multiplicity $\mu_i$ of the value $i-a$ is given by
$$\mu_i=\begin{cases}k_1, \quad & i = 0\\
k_i+1+k_{i+1}, &1\leq i\leq m-1\\
k_m,&i=m.\end{cases}$$
\begin{proof}
The parts $L_a$ and $L_b$ of $W$ contribute to the multisets $\{0,-1,\dots,-a\}$ and $\{0,1,\dots, b\}$, respectively. It remains to show that the middle part $(RL)^{k_1}R(RL)^{k_2}R\cdots(RL)^{k_m}R$, excluding its endpoints, contributes to a multiset as in the statement.

First, the lowest elevation $-a\,(=0-a)$ appears after the first occurrence (which was included in $\{0,-1,\dots, -a\}$) precisely $k_1$ times. Similarly, the highest elevation $b\,(=m-a)$ appears before the last occurrence (included in $\{0,1,\dots, b\}$) exactly $k_m$ times. For $1\leq i\leq m-1$, the elevation $i-a$ appears $k_i$ times ``at the peaks'' of $(RL)^{k_i}$, then once more, and finally $k_{i+1}$ times ``after the peaks'' of $(RL)^{k_{i+1}}$, giving multiplicity $k_i+1+k_{i+1}$. See Figure \ref{figuremultisetexample} for an example.
\end{proof}

\begin{figure}[H]
\begin{center}
\scalebox{0.5}{
\begin{tikzpicture}
    \newcommand\s{0.4}
    \node[circle,scale=0.4] at (0,0) (0) {};
    \node[circle,scale=0.4] at (1*\s,-1) (1) {};
    \node[circle,scale=0.4] at (2*\s,-2) (2) {};
    \node[circle,scale=0.8,fill = black] at (3*\s,-1) (3) {};
    \node[circle,scale=0.4] at (4*\s,-2) (4) {};
    \node[circle,scale=0.8,fill = black] at (5*\s,-1) (5) {};
    \node[circle,scale=0.4] at (6*\s,-2) (6) {};
    \node[circle,scale=0.8,fill = black] at (7*\s,-1) (7) {};
    \node[circle,scale=0.4] at (8*\s,-2) (8) {};
    \node[circle,scale=0.4] at (9*\s,-1) (9) {};
    \node[circle,scale=0.8,fill = black] at (10*\s,0) (10) {};
    \node[circle,scale=0.4] at (11*\s,-1) (11) {};
    \node[circle,scale=0.8,fill = black] at (12*\s,0) (12) {};
    \node[circle,scale=0.4] at (13*\s,-1) (13) {};
    \node[circle,scale=0.4] at (14*\s,0) (14) {};
    \node[circle,scale=0.8,fill = black] at (15*\s,1) (15) {};
    \node[circle,scale=0.4] at (16*\s,0) (16) {};
    \node[circle,scale=0.4] at (17*\s,1) (17) {};
    \node[circle,scale=0.4] at (18*\s,2) (18) {};
    \node[circle,scale=0.4] at (19*\s,1) (19) {};
    \node[circle,scale=0.4] at (20*\s,0) (20) {};
    \draw (0)--(1)--(2)--(3)--(4)--(5)--(6)--(7)--(8)--(9)--(10)--(11)--(12)--(13)--(14)--(15)--(16)--(17)--(18)--(19)--(20);
    \draw [semithick] (11*\s,-1) circle (3pt);
    \draw [semithick] (13*\s,-1) circle (3pt);
    \draw [semithick] (16*\s,0) circle (3pt);
    \draw[dashed] (2.5*\s,-2.5)--(2.5*\s,2.5);
    \draw[dashed] (17.5*\s,-2.5)--(17.5*\s,2.5);
    \draw[dashed] (2*\s,-1)--(14*\s,-1);
    \draw[dashed] (9*\s,0)--(17*\s,0);
    \draw[dashed] (14*\s,1)--(18*\s,1);
\end{tikzpicture}
}
\end{center}
\caption{Illustration on how the elevation multiset of the word $W=LL(RL)^3R(RL)^2R(RL)RRLL$ can be written as in Lemma \ref{multisetvalues}. For this word, $a=b=2$, $m=4$, and $(k_1,k_2,k_3,k_4)=(3,2,1,0)$. The three different parts of $E(W)$ are separated with vertical dashed lines. The horizontal dashed lines illustrate how the elevations $i-2$ for $1\leq i\leq 3$ appear $k_i+1+k_{i+1}$ times: first $k_i$ times ``at the peaks'' of $(RL)^{k_i}$ (black dots), then once more and finally $k_{i+1}$ times ``after the peaks'' of $(RL)^{k_{i+1}}$ (white circles). Note that there are no white circles along elevation 1 (topmost horizontal line), because $k_4=0$.}
\label{figuremultisetexample}
\end{figure}
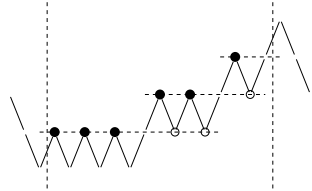

\end{lemma}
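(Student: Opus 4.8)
The plan is to work directly from the explicit factorization $W=L^a M L^b$ with $M=(RL)^{k_1}R(RL)^{k_2}R\cdots(RL)^{k_m}R$ supplied by Proposition \ref{reduced-equiv}(iii), and to read off the elevation sequence block by block. First I would record the net effect of each syllable: each factor $(RL)^{k_i}$ produces no net change in elevation, while each of the $m$ ``standalone'' letters $R$ raises the elevation by $1$. Hence $M$ raises the elevation by $m$; since the prefix $L^a$ ends at elevation $-a$, the word $M$ runs from elevation $-a$ to elevation $-a+m=b$ (using $a+b=m$), and then $L^b$ descends back to $0$. This already fixes the elevation at the place where $L^a$ meets $M$ and at the place where $M$ meets $L^b$.

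Next I would partition the positions $0,1,\dots,l(W)$ into three consecutive stretches: the positions $0,\dots,a$ (the descent through $L^a$, ending at the first position of $M$), the positions strictly interior to $M$, and the final $b+1$ positions (beginning at the last position of $M$ and descending through $L^b$). The first stretch has elevations $0,-1,\dots,-a$, contributing $\{0,-1,\dots,-a\}$ to $E(W)$; the last has elevations $b,b-1,\dots,0$, contributing $\{0,1,\dots,b\}$. For the interior of $M$ I would use that after the $i$-th standalone $R$ the elevation equals $i-a$, and that the block $(RL)^{k_i}$ oscillates between elevations $(i-1)-a$ and $i-a$, reaching the upper level $i-a$ exactly $k_i$ times (as ``peaks'') and the lower level $(i-1)-a$ exactly $k_i$ times (as ``valleys''). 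Collecting all interior occurrences of a fixed value $i-a$ then gives $k_i$ occurrences as peaks of $(RL)^{k_i}$, one occurrence at the landing right after the $i$-th standalone $R$, and $k_{i+1}$ occurrences as valleys of $(RL)^{k_{i+1}}$, with the two boundary positions of $M$ (at elevations $-a$ and $b$) already accounted for in the first and last stretches. This produces $\mu_0=k_1$, $\mu_i=k_i+1+k_{i+1}$ for $1\le i\le m-1$, and $\mu_m=k_m$, and the union of the three contributions is exactly the claimed $E(W)$.

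The step I expect to need the most care is the interior-of-$M$ count: one must be scrupulous about the two shared boundary positions so that every position is counted exactly once, and check the degenerate cases in which the oscillation picture partially collapses — empty blocks $k_i=0$, an absent prefix ($a=0$) or suffix ($b=0$), and the empty word ($m=0$). Fixing once and for all the convention that $L^a$ owns positions $0,\dots,a$, that $L^b$ owns the final $b+1$ positions, and that $M$'s interior owns everything strictly in between should make the enumeration unambiguous, and the figure accompanying the statement already provides the geometric picture of how the three pieces assemble.
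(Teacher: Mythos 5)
Your proposal is correct and follows essentially the same route as the paper: decompose $W$ into $L^a$, the middle block $M$, and $L^b$, let the two outer stretches account for $\{0,-1,\dots,-a\}$ and $\{0,1,\dots,b\}$, and count each interior level $i-a$ as $k_i$ peaks of $(RL)^{k_i}$ plus one landing after the $i$th standalone $R$ plus $k_{i+1}$ valleys of $(RL)^{k_{i+1}}$. Your explicit bookkeeping of which boundary positions belong to which stretch just makes precise what the paper compresses into ``excluding its endpoints.''
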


\white
\begin{proposition}
\label{unique-red}
Every equivalence class of balanced words contains a unique reduced word, which is the minimal word of the equivalence class.
\end{proposition}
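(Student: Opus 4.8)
The strategy is to split the statement into existence and uniqueness. Existence is essentially already proved: by Lemma \ref{minimal-red}, the minimal word of any equivalence class is reduced, so each class contains at least one reduced word, and once uniqueness is known this reduced word has to be \emph{the} minimal word. So the work is uniqueness: if $W$ and $W'$ are reduced balanced words with $W\sim W'$, then $W=W'$. The idea is to show that a reduced word is completely recoverable from its elevation multiset; since $W\sim W'$ forces $E(W)=E(W')$ by Lemma \ref{equivmultiset}, this will finish the proof.

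Concretely, I would start from the normal form of Proposition \ref{reduced-equiv}(iii), writing $W=L^a(RL)^{k_1}R(RL)^{k_2}R\cdots(RL)^{k_m}RL^b$ with $a+b=m$, and note that these parameters determine $W$ (e.g. $m$ is the number of $R$'s, $a$ and $b$ the lengths of the maximal leading and trailing runs of $L$'s, and $k_i$ the number of $L$'s strictly between the $(i-1)$-th and $i$-th $R$). Then I would recover the parameters from $E(W)$ in the following order. First, the elevation of $W$ ranges exactly over the integer interval $[-a,b]$, so $a=-\min E(W)$ and $b=\max E(W)$, whence $m=a+b$ is determined. Next, for each $i$ with $0\le i\le m$, Lemma \ref{multisetvalues} says the multiplicity of the value $i-a$ in $E(W)$ equals $\mu_i+c_i$, where $c_i$ is the number of the two blocks $\{0,-1,\dots,-a\}$ and $\{0,1,\dots,b\}$ that contain $i-a$; explicitly $c_i=2$ when $i=a$ (the value $0$ lies in both blocks) and $c_i=1$ otherwise. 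Since $c_i$ depends only on $a$ and $m$, each $\mu_i$ is determined by $E(W)$. Finally, the $k_i$ are recovered recursively from the formula in Lemma \ref{multisetvalues}: $k_1=\mu_0$ and $k_{i+1}=\mu_i-1-k_i$ for $1\le i\le m-1$. Hence all parameters of $W$ are functions of $E(W)$; applying the same to $W'$ and using $E(W)=E(W')$ gives $W=W'$. Combining with existence, the unique reduced word in a class must equal the minimal word of Lemma \ref{minimal-red}.

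I do not expect a genuine obstacle; the delicate point is purely bookkeeping — the overlap at $i=a$, where the value $0$ is contributed both by the leading $L^a$ and by the trailing $L^b$, which is exactly what the correction term $c_i$ isolates — together with a quick check of the degenerate cases $m=0$ (the only reduced word is empty) and $a=0$ or $b=0$ (where the description of $c_i$ still holds). Everything else is a direct unwinding of Proposition \ref{reduced-equiv} and Lemma \ref{multisetvalues}.
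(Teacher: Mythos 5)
Your proof is correct and follows essentially the same route as the paper: existence from Lemma \ref{minimal-red}, and uniqueness by recovering the parameters $a$, $b$, and $k_1,\dots,k_m$ of the normal form in Proposition \ref{reduced-equiv}(iii) from the elevation multiset via Lemmas \ref{equivmultiset} and \ref{multisetvalues}. Your explicit bookkeeping of the overlap at elevation $0$ is just a slightly more careful version of the paper's back-substitution step (your parenthetical claim that $m$ is the number of $R$'s is inaccurate --- $W$ has $m+\sum_i k_i$ of them --- but that remark is never used).
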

\begin{proof} Existence: The minimal word of an equivalence class is reduced, by Lemma \ref{minimal-red}.

Uniqueness: Assume that $X$ and $Y$ are reduced words with $X\sim Y$. It will be shown that $X=Y$. By Proposition \ref{reduced-equiv}, both $X$ and $Y$ are of the form (iii). By Lemma \ref{multisetvalues}, the smallest and largest elevation values of a word of this form are $-a$ and $b$, respectively. The assumption $X\sim Y$ implies $E(X)=E(Y)$ by Lemma \ref{equivmultiset}, which means that $X$ and $Y$ share the parameters $a$ and $b$. Thus $X=L^aX'L^b$ and $Y=L^aY'L^b$, where $X'=(RL)^{k_1}R\cdots (RL)^{k_m}R$ and $Y'=(RL)^{l_1}R\cdots (RL)^{l_m}R$ for some $k_i\geq 0$ and $l_i\geq 0$, where $1\leq i\leq m$. (Note that the number of factors $(RL)^kR$ is $m=a+b$ for both $X'$ and $Y'$.) It suffices to show that $X'=Y'$.

Using the expression for $E(X)=E(Y)$ from Lemma \ref{multisetvalues}, and especially the fact that the multiplicities of the values in the third part agree, leads to the system of equations
$$\begin{cases}k_1=l_1\\k_i+1+k_{i+1}=l_i+1+l_{i+1}, \quad & 1\leq i\leq m-1\\k_m=l_m.\end{cases}$$
It is easy to see by back-substitution that $k_i=l_i$ for all $1\leq i\leq m$. Thus $X'=Y'$ and therefore $X=Y$. This shows the uniqueness.
\end{proof}

\paragraph{Finding the minimal word:} The reduction algorithm (presented below) takes in a balanced word $X$ and produces a reduced word in the equivalence class of $X$; by Proposition \ref{unique-red}, this reduced word is the minimal word of the equivalence class of $X$. The key idea of the algorithm is to look for certain kinds of subwords and perform swaps until no such subwords exist.

\paragraph{Reduction algorithm:} Let $X$ be a balanced word. The reduction algorithm is defined using a recursive sequence $(X_0,X_1,\dots)$ of words, starting with $X_0=X$. For $i\geq 0$, proceed as follows. If $X_i$ is reduced, the algorithm terminates with output $X_i$. If $X_i$ is not reduced, find the leftmost occurrence of a subword of type $UD$ where $U$ is an upper prime and $D$ is a lower prime; then $X_i=W_1UDW_2$ for some words $W_1,W_2$. Now let $X_{i+1}=W_1DUW_2$. 

\white
\begin{proposition}
The reduction algorithm terminates.
\end{proposition}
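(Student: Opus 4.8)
The plan is to argue by strict monotone descent inside a finite set. First I would check that every word $X_i$ produced by the algorithm lies in the equivalence class of the input word $X$, and is in particular balanced. Indeed, the passage from $X_i=W_1UDW_2$ to $X_{i+1}=W_1DUW_2$ is exactly a swap of type $(U,D)$ with $U$ an upper prime and $D$ a lower prime; by Lemma \ref{reductionlemma} (or Proposition \ref{equivalence}) this gives $X_{i+1}\sim X_i$, so by induction $X_i\sim X$ for every $i$ for which $X_i$ is defined. Since a swap does not change the number of occurrences of either letter, each $X_i$ is again a balanced word.

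Next I would extract the key inequality. Whenever $X_{i+1}$ is defined, i.e.\ whenever $X_i$ is not reduced, Lemma \ref{reductionlemma} gives $X_{i+1}=W_1DUW_2<W_1UDW_2=X_i$ in alphabetical order (the choice of the \emph{leftmost} such subword $UD$ is irrelevant for this). Hence the sequence $X_0,X_1,X_2,\dots$ of words produced by the algorithm is strictly decreasing with respect to alphabetical order for as long as it continues.

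Finally I would combine these observations with Corollary \ref{finite}: the equivalence class of $X$ is a finite set of words, and alphabetical order restricts to a linear order on it, so it admits no infinite strictly decreasing sequence. Therefore the sequence $X_0,X_1,\dots$ cannot be infinite, which by the definition of the algorithm means that some $X_i$ is reduced; at that stage the algorithm terminates, with output the reduced word $X_i$.

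I do not expect a real obstacle here, since the substantive content is already contained in Lemma \ref{reductionlemma} (a reducing swap strictly lowers the word) and Corollary \ref{finite} (equivalence classes are finite). The only point requiring a little care is to phrase the well-foundedness step correctly: alphabetical order is \emph{not} a well-order on the set of all words, so one must first confine attention to the finite equivalence class of $X$ before invoking the absence of infinite descending chains.
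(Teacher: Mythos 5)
Your proof is correct and follows essentially the same route as the paper: each $X_i$ stays in the (finite, by Corollary \ref{finite}) equivalence class of $X$, and Lemma \ref{reductionlemma} gives strict alphabetical descent, forcing termination. The paper phrases the last step as ``all $X_i$ are distinct, but the class is finite,'' which is the same observation.
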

\begin{proof}
For each $i$, The words $X_i$ and $X_{i+1}$ are related by a swap, which implies that every $X_i$ belongs to the  equivalence class of $X$. By Lemma \ref{reductionlemma}, $X_{i+1}<X_i$ for each $i$, so all $X_i$ are distinct. However, the equivalence class of any word is finite by Corollary \ref{finite}. These observations together imply that the algorithm must terminate.
\end{proof}

\paragraph{Note:} Equivalence classes of prime words can be found by listing all the prime words of given length, and then applying the reduction algorithm to each word. An equivalence class consists of prime words that give the same output.

\paragraph{Example:} The following two tables list all the words in the equivalence classes of upper and lower primes  of length at most 10. Each row corresponds to an equivalence class. The words are listed in alphabetical order, so the minimal word always comes first. The parentheses are added to highlight the differences between words in the equivalence class.

\begin{figure}[H]
\begin{center}
\begin{tabular}{|c|}
    \hline
    Upper primes\\
    \hline
    $RL$\\
    \hline
    $RRLL$\\
    \hline
    $RRLRLL$\\
    \hline
    $RRRLLL$\\
    \hline
    $RRLRLRLL$\\
    \hline
    $RR(LR)(RL)LL$,\, $RR(RL)(LR)LL$\\
    \hline
    $RRRLRLLL$\\
    \hline
    $RRRRLLLL$\\
    \hline
    $RRLRLRLRLL$\\
    \hline
    $RR(LR)(LR)(RL)LL$,\, $RR(LR)(RL)(LR)LL$,\, $RR(RL)(LR)(LR)LL$\\
    \hline
    $RR(LR)(RL)(RL)LL$,\, $RR(RL)(LR)(RL)LL$,\, $RR(RL)(RL)(LR)LL$\\
    \hline
    $RR(LR)(RRLL)LL$,\, $RR(RRLL)(LR)LL$\\
    \hline
    $RRRLRLRLLL$\\
    \hline
    $RRR(LR)(RL)LLL$,\, $RRR(RL)(LR)LLL$\\
    \hline
    $RRRRLRLLLL$\\
    \hline
    $RRRRRLLLLL$\\
    \hline
\end{tabular}    
\end{center}
    \caption{Equivalence classes of upper primes of length $\leq 10$.}
    \label{listupper}
\end{figure}

\begin{figure}[H]
\begin{center}
\begin{tabular}{|c|}
    \hline
    Lower primes \\
    \hline
    $LR$\\
    \hline
    $LLRR$ \\
    \hline
    $LLLRRR$\\
    \hline
    $LLRLRR$\\
    \hline
    $LLLLRRRR$\\
    \hline
    $LLLRLRRR$\\
    \hline
    $LL(LR)(RL)RR$,\, $LL(RL)(LR)RR$\\
    \hline
    $LLRLRLRR$\\
    \hline
    $LLLLLRRRRR$ \\
    \hline
    $LLLLRLRRRR$\\
    \hline
    $LLL(LR)(RL)RRR$,\, $LLL(RL)(LR)RRR$\\
    \hline
    $LL(LLRR)(RL)RR$,\, $LL(RL)(LLRR)RR$ \\
    \hline
    $LLLRLRLRRR$ \\
    \hline
    $LL(LR)(LR)(RL)RR$,\, $LL(LR)(RL)(LR)RR$,\, $LL(RL)(LR)(LR)RR$ \\
    \hline
    $LL(LR)(RL)(RL)RR$,\, $LL(RL)(LR)(RL)RR$,\, $LL(RL)(RL)(LR)RR$ \\
    \hline
    $LLRLRLRLRR$ \\
    \hline
\end{tabular}    
\end{center}
    \caption{Equivalence classes of lower primes of length $\leq 10$.}
    \label{listlower}
\end{figure}

\pagebreak

\paragraph{Example:} Let $U=RL$ and $D=LR$. One equivalence class of upper primes of length 12 consists of six words that are of the type $RRZLL$ where $Z$ is a product of four prime words: two copies of $U$ and two copies of $D$. Note that there are exactly $\binom{4}{2}=6$ ways to arrange these four prime words. The words are as follows:
\begin{itemize}
    \item $W_{DDUU}=RR(LR)(LR)(RL)(RL)LL$
    \item $W_{DUDU}=RR(LR)(RL)(LR)(RL)LL$
    \item $W_{DUUD}=RR(LR)(RL)(RL)(LR)LL$
    \item $W_{UDDU}=RR(RL)(LR)(LR)(RL)LL$
    \item $W_{UDUD}=RR(RL)(LR)(RL)(LR)LL$
    \item $W_{UUDD}=RR(RL)(RL)(LR)(LR)LL$.
\end{itemize}

Figure \ref{figurealmostcomplete} illustrates how the words are related by swaps: each edge corresponds to a swap, and the directed edges are the special swaps from the reduction algorithm. The graph is almost complete, with only two edges missing. One of the missing edges is between $W_{UUDD}$ and $W_{DUDU}$, and the other one between $W_{DDUU}$ and $W_{UDUD}$. It can be seen that neither of these two pairs is related by a swap, by checking all the possible subwords $FG$ where $F$ and $G$ are balanced words.

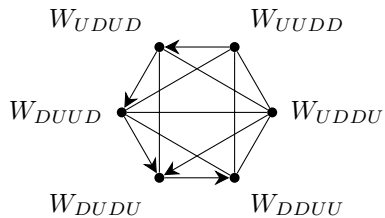
\begin{figure}[H]
\begin{center}
\begin{tikzpicture}
    \node[circle,fill=black,scale=0.4] at (1,0) (1) {};
    \node[circle,fill=black,scale=0.4] at (0.5,0.87) (2) {};
    \node[circle,fill=black,scale=0.4] at (-0.5,0.87) (3) {};
    \node[circle,fill=black,scale=0.4] at (-1,0) (4) {};
    \node[circle,fill=black,scale=0.4] at (-0.5,-0.87) (5) {};
    \node[circle,fill=black,scale=0.4] at (0.5,-0.87) (6) {}; 
    \draw[-{Stealth[length=2mm, width=2mm]}] (2)--(3);
    \draw[-{Stealth[length=2mm, width=2mm]}] (3)--(4);
    \draw[-{Stealth[length=2mm, width=2mm]}] (4)--(5);
    \draw[-{Stealth[length=2mm, width=2mm]}] (5)--(6);
    \draw[-{Stealth[length=2mm, width=2mm]}] (1)--(5);
    \draw (6)--(1)--(2)--(6)--(4)--(1)--(3)--(5);
    \draw (2)--(4);
    \node[right = .5mm of 1] {$W_{UDDU}$};
    \node[above right = .5mm of 2] {$W_{UUDD}$};
    \node[above left = .5mm of 3] {$W_{UDUD}$};
    \node[left = .5mm of 4] {$W_{DUUD}$};
    \node[below left = .5mm of 5] {$W_{DUDU}$};
    \node[below right = .5mm of 6] {$W_{DDUU}$};
\end{tikzpicture}
\end{center}
\caption{Illustration on how the words of the equivalence class are related by swaps. There is an edge (directed or undirected) between words if and only if the words are related by a swap. The directed edges indicate the swaps that occur when the reduction algorithm is applied.}
\label{figurealmostcomplete}
\end{figure}

\end{document}